\documentclass[11pt]{article}
\usepackage[utf8]{inputenc}

\usepackage{amsmath} 
\usepackage{amsthm,amssymb,mathrsfs} 
\usepackage[T1]{fontenc}
\usepackage[utf8]{inputenc}
\usepackage{ae,aecompl}
\usepackage{times}
\usepackage{cite}

\usepackage[colorlinks=true,
            linkcolor=blue,
            urlcolor=cyan,
            citecolor=green]{hyperref}

\usepackage{verbatim}
\usepackage{pgfplots}
\pgfplotsset{compat=1.7}
%
\usepackage{tikz}
\usepackage[numeric,backrefs]{amsrefs}
\usepackage{enumerate}
\usepackage[cmtip, all]{xy}
\usepackage{booktabs}
\usepackage{pb-diagram}
\usepackage{bbm}
\usepackage[top=2.54cm,bottom=2.54cm,left=2.54cm,right=2.54cm]{geometry}
\usepackage{tikz-cd} 
\numberwithin{equation}{section} 
\usepackage{bbding}
\usepackage{multicol, color}
 \usepackage{array, multirow} 
 \usepackage{arydshln}
 \usepackage{slashed}



\newcommand{\rF}{{\rm F}}

\newcommand{\rI}{{\rm I}}

\newcommand{\rL}{{\rm L}}




\newcommand{\bN}{{\bf N}}

\newcommand{\cA}{\mathcal{A}}

\newcommand{\cE}{\mathcal{E}}

\newcommand{\cM}{\mathcal{M}}

\newcommand{\cS}{\mathcal{S}}


\newcommand{\sX}{\mathscr{X}}


\newcommand{\fg}{{\mathfrak g}}

\newcommand{\fh}{{\mathfrak h}}

\newcommand{\fk}{{\mathfrak k}}

\newcommand{\fm}{{\mathfrak m}}

\newcommand{\fp}{{\mathfrak p}}

\newcommand{\fs}{{\mathfrak s}}

\newcommand{\fu}{{\mathfrak u}}



\newcommand{\N}{\bN}

\newcommand{\R}{\mathbb{R}}
\newcommand{\C}{\mathbb{C}}


\newcommand{\su}{\mathfrak{su}}
\newcommand{\so}{\mathfrak{so}}

\newcommand{\gl}{\mathfrak{gl}}
\newcommand{\fsl}{\mathfrak{sl}}

\newcommand{\SO}{{\rm SO}}

\newcommand{\SU}{{\rm SU}}

\newcommand{\U}{{\rm U}}


\newcommand{\Ad}{\mathrm{Ad}}

\renewcommand{\det}{\mathop\mathrm{det}\nolimits}

\newcommand{\End}{{\mathrm{End}}}

\renewcommand{\epsilon}{\varepsilon}

\newcommand{\ad}{\mathrm{ad}}

\newcommand{\diag}{\mathrm{diag}}

\renewcommand{\Im}{\mathop{\mathrm{Im}}}

\renewcommand{\Re}{\mathop{\mathrm{Re}}}

\newcommand{\tr}{\mathop{\mathrm{Tr}}\nolimits}


\newcommand{\wt}[1]{\widetilde{#1}}


\newcommand{\sym}{sym}

\newcommand{\fraksp}{\mathfrak{sp}}

\newcommand{\cym}{\mathcal{YM}}

\newcommand{\qandq}{\quad\text{and}\quad}
\newcommand{\qforq}{\quad\text{for}\quad}
\newcommand{\qwithq}{\quad\text{with}\quad}
\newcommand{\qwhereq}{\quad\text{where}\quad}

\def\<{\mathopen{}\left<}
\def\>{\right>\mathclose{}}
\def\({\mathopen{}\left(}
\def\){\right)\mathclose{}}


\definecolor{gold}{rgb}{0.85,.66,0}
\definecolor{cherry}{rgb}{0.9,.1,.2}
\definecolor{burgundy}{rgb}{0.8,.2,.2}
\definecolor{orangered}{rgb}{0.85,.3,0}
\definecolor{orange}{rgb}{0.85,.4,0}
\definecolor{olive}{rgb}{.45,.4,0}
\definecolor{lime}{rgb}{.6,.9,0}
\definecolor{green}{rgb}{.2,.7,0}
\definecolor{grey}{rgb}{.4,.4,.2}
\definecolor{brown}{rgb}{.4,.3,.1}


\newtheorem{theorem}{Theorem}[section]
\theoremstyle{Theorem}

\newtheorem{proposition}{Proposition}[section] 
\theoremstyle{Proposition}

\newtheorem{corollary}[proposition]{Corollary}  
\newtheorem{lemma}[proposition]{Lemma}  

\theoremstyle{remark} 
\newtheorem{remark}[proposition]{Remark} 

\theoremstyle{definition}
\newtheorem{definition}[proposition]{Definition}



\DeclareMathOperator{\Gl}{Gl}
\DeclareMathOperator{\Tr}{Tr}

\DeclareMathOperator{\Span}{ {\rm Span}}




\newcommand{\ig}{{\textsl g}}

\usetikzlibrary{shapes.geometric}

\newcommand{\gt}{\mathrm{G}_2}
\newcommand{\V}{V^{5,2}}
 \newcommand{\Spi}{
  \slashed{S}
}
\newcommand{\Hpi}{
  \slashed{H}
}
\usepackage[textsize=footnotesize,textwidth=3.5cm]{todonotes}
\usepackage[affil-it]{authblk}

\AtBeginDocument{%
   \def\MR#1{}
}

\begin{document}

\title{Homogeneous $\gt$ and Sasakian instantons on the Stiefel $7$-manifold}

\author[1]{Andrés J. Moreno}
\author[2]{Luis E. Portilla}

\affil[1]{Universidade Estadual de Campinas (Unicamp), SP, Brazil}

\affil[2]{\textit{Current affiliation:} University of Cauca, Popayán, Colombia
\\ 

\textit{Former affiliation:} Université de Brest, CNRS UMR 6205, LMBA, F-29238 Brest, France}
\date{\today}

\maketitle
\begin{abstract} 
We study homogeneous instantons on the seven--dimensional Stiefel manifold
$\V$ in the context of $\gt$ and Sasakian geometry. According to the
reductive decomposition of $\V$, we provide an explicit description of all
invariant $\gt$ and Sasakian structures. In particular, we characterise the
invariant $\gt$--structures inducing a Sasakian metric, among which the
well--known nearly parallel $\gt$--structure (Sasaki--Einstein) is included.
As a consequence, we classify the invariant connections on homogeneous
principal bundles over $\V$ with gauge group $\U(1)$ and $\SO(3)$, satisfying
either the $\gt$ or the Sasakian instanton condition. In addition, we study infinitesimal deformations of $\gt$--instantons on
coclosed $\gt$--manifolds using a spinorial approach. By means of a
Weitzenb\"ock--type formula with torsion, we obtain curvature obstructions
to the existence of non--trivial infinitesimal deformations and prove
rigidity results for certain homogeneous $\gt$--instantons.
\end{abstract}
\section{Introduction}\label{sec:intro}
The Stiefel manifold $V^{n,k}$ of orthonormal $k$-frames in the $n$-dimensional Euclidean space (i.e. $1\leq k\leq n$) admits a transitive $\SO(n)$-action with isotropy subgroup isomorphic to $\SO(n-k)$, thus, it can be written homogeneously as  $V^{n,k}=\SO(n)/\SO(n-k)$ and consequently, its family of $\SO(n)$-invariant Riemannian metrics is described in terms of the isotropy representation of $V^{n,k}$. For instance, for $k=1$ or $k=n$, the isotropy representation is irreducible (indeed $V^{n,1}=S^{n}$ or $V^{n,n}=\SO(n)$), in those cases, there is a unique $\SO(n)$-invariant metric, which is Einstein \cite{wolf1968}. 
For $k=2$, the isotropy representation is reducible and the space of $\SO(n)$-invariant Riemannian metrics is described by a $4$-dimensional family, within which there exists a unique Einstein metric, which is inherited from the Grassmannian  $G_2(\R^n)$ (e.g. \cite{Arvanitoyeorgos1996,kerr1998new}). 
In addition, $V^{n,2}$ fits as an instance of the Boothby-Wang construction, as the total space of a $S^1$-principal bundle   \cite{boothby1958}*{Theorem 3}, moreover, it has a regular contact structure whose associated Reeb vector field generates a one-parameter subgroup of right translations of $V^{n,2}$.

A \emph{contact structure}\footnote{The reader unfamiliar with Sasakian geometry can find a complete study on this subject   in \cite{Boyer2008}} on a  $(2n+1)$-dimensional manifold $M$ is a 1-form   satisfying $\eta\wedge (d\eta)^n\neq 0$, in this case there exists a unique vector $\xi\in \sX(M)$, called   \emph{Reeb vector field},   such that  $ \eta(\xi)=1$.  The contact structure is called an \emph{almost contact structure} if there exists an endomorphism  $\Phi\in \End(TM)$ which satisfy 
$\Phi^2=-\rI_{TM}+\eta\otimes \xi.$
A  Riemannian metric $g$ is called compatible with the contact structure, if $g$ satisfies 
$g(\Phi X,\Phi Y )= g(X,Y)-\eta(X)\eta(Y),$
in this case, the contact structure is called an \emph{almost contact metric structure}. In addition, if $\cS=(\eta,\xi,g,\Phi)$ satisfies $2g(X,\Phi(Y))=d\eta(X,Y)$, the quadruple $\cS$ is called a \emph{contact metric structure}.  Furthermore, if the Nijenhuis tensor $N_\Phi$ induced by $\Phi$ satisfies $N_\Phi=-d\eta\otimes \xi$, then $\cS$ is called a \emph{Sasakian structure}.  In this paper, we consider the $7$--dimensional  Stiefel manifold $\V$  endowed with a one-parameter family of $\SO(5)$-invariant Sasakian structures $\cS=(\eta,\xi,g,\Phi)$. For instance, homogeneous contact/Sasakian structures have been studied in \cite{andrada2009,correa2019} and \cite{perrone1998homogeneous}.\\

On the other hand, a differential $3$-form $\varphi$ on an oriented $7$--dimensional manifold $M$ is called a $\gt$--structure
if at each $p\in M$, there is a basis $e_1,\dots, e_7$ of $T_pM$ such that
\begin{align}\label{eq: def G2-structure}
\varphi_p=e^{125}+e^{136}+e^{147}+ e^{234}-e^{267}+e^{357}-e^{456},
\end{align}
where $e^1, \dots , e^7$  denotes the dual basis  of $T_p^\ast M$ and $e^{ijk}=e^i\wedge e^j\wedge e^k$. In particular, $\varphi$ determines a Riemannian metric $g_\varphi$, we study the family of $\SO(5)$-invariant $\gt$-structures on $\V$, such that the induced  metric $g_\varphi$ is Sasakian. With respect to these structures, we study the theoretical gauge instanton equations associated with the underlying Sasakian and $\gt$-geometry of $\V$. Namely, on a $G$--principal bundle  $P\to M^7$ with $\sigma\in\Omega^{3}(M)$, the $1$-form connection $A\in \mathcal{A}(P)$ is called an $\lambda$-\emph{instanton}, if it satisfies   (e.g. \cites{donaldson1998gauge,Tian2000}) 
\begin{equation}
\label{eq:introInstEquation}
\ast(\sigma\wedge F_A)=\lambda F_A , \quad\text{where} \quad\lambda\in\R 
\end{equation} 
where $F_A=dA+A\wedge A$ denotes the corresponding curvature $2$-form. According to the specific pair $(\sigma,\lambda)$ in \eqref{eq:introInstEquation}, we define:
\begin{enumerate}
\item 
A \textit{Self-dual  contact instantons (SDCI) \cite{portilla2023}} is a solution  of   \eqref{eq:introInstEquation}  with  $\lambda=1$ and  $\sigma:=\eta\wedge \omega$ naturally  defined by a contact structure of $(M,\eta)$, where $\omega:=\frac{1}{2}d\eta$ is called the \emph{fundamental $2$-form}.
\item 
A \textit{$\gt$--instantons \cite{corrigan1983first,donaldson1998gauge}} is a  solution of  \eqref{eq:introInstEquation} with $\lambda=-1$ and $\sigma=\varphi$ a \emph{coclosed} (i.e. $d\ast\varphi=0$) $\gt$--structure on $M$. Equivalently, a connection $A$ is a $\fg$--instanton if and only if
\begin{equation}\label{eq:G2instantonIntro}
    F_A\wedge \ast\varphi=0
\end{equation}
\end{enumerate}    
Furthermore, for  $E\to (M^7,\cS)$ a \textit{Sasakian  holomorphic  bundle} \cite{biswas2010vector},  the usual notion of Hermitian Yang-Mills (HYM) connection on a  Kähler manifolds  is extended to odd dimension in the natural fashion to the so-called \emph{transverse Hermitian Yang-Mills connections (tHYM)} (see  \eqref{eq:tHYM}). The concept of Chern connections also extends naturally to a Sasakian version, as a connection mutually compatible with the transverse holomorphic structure and the Hermitian metric. A noteworthy result is that if $E\to M$ is a holomorphic Sasakian bundle on a contact Calabi-Yau manifold $M$, along compatible connections the notion of SDCI, $\gt$-instantons and tHYM coincide. cf. \cite[Theorem~1.1]{portilla2023} and \cite[Lemma ~21]{calvo2020gauge}.  

In \cites{donaldson1998gauge,Donaldson2011},  Donaldson, Segal and Thomas  suggested  that it may be possible to use  $\gt$-instantons  to construct an enumerative invariant of $\gt$-manifolds. For instance, when $(M^7,\varphi)$ has a torsion-free $\gt$-structure $\varphi$ (i.e. $d\varphi=0$ and $d\ast\varphi=0$ \cite{fernandez1982riemannian}), solutions to the $\gt$-instanton equation have been studied  in \cite{sa2015g2,walpuski2015g_2,clarke2014instantons,lotay20182,singhal2022}. However, the construction of metrics with holonomy $\gt$ has proven to be a challenging task, thus, the torsion-free condition has been relaxed to the class of coclosed $\gt$-structures \cite{ball2019gauge,clarke2023}, which has a notable application   in theoretical physics to the Killing spinor equations in supergravity (cf.  \cite{Ivanov2005,Ivanov10,delaOssaG21,Lotay2022,Fernandez2011}). Analogously, Wang has proposed studying the compactification of the moduli space of contact instantons as a tool to construct a Donaldson-type invariant for contact $7$-manifolds 
\cite{wang2012higher}.\\

\textbf{Description of the main results: }
Let $\so(5)= \so(3)\oplus \fm$ be a reductive decomposition of $\V$, in \cite{reidegeld2010spaces} Reidegeld proved that the space of $\SO(5)$-invariant $\gt$-structures has dimension $5$ and the space of induced $\SO(5)$-invariant Riemannian metrics has dimension $4$. Thus, we prove in Lemma \ref{lem: SO(5) invariant G2 structures}  that the $\Ad(\SO(3))$-invariant $3$-form \eqref{eq: on G2-form} determined by  $(a,b,x,y,z)\in \R^5$ defines a $\gt$-structure (up to $\gt$-transformation), if 
$$
a^2+by>0, \quad b^2+ax>0 \quad z\neq 0 \qandq xy=ab.
$$
In particular, it defines a family of (non-purely) coclosed $\gt$-structures, within the nearly parallel $\gt$-structures arises as an $S^1$--family for $x=a$, $y=b$ and $a^2+b^2=z^2=\frac{27}{512}$ (cf. Theorem \ref{thm:Nearly}).

In addition, for any $y_1,y_2,y_3\in \R^+$, Theorem \ref{thm:V52 Sasakian1} characterises the $\SO(5)$-invariant  Sasakian metrics as 
$$
y_1=4y_2^2 \qandq y_2=y_3, 
$$ 
as a consequence, for a given $(z,a,b,x,y)\in \R^5$, we describe the family of $\SO(5)$-invariant $\gt$-structures inducing a Sasakian metric (cf. Corollary \ref{cor:V52 Sasakian}).

In Section \ref{sec:InstantonCOnstruction}, we revisit the interaction between SDCI, $\gt$-instantons and tHYM connections addressed in \cite[Proposition~3.11]{portilla2023} and  \cite[Lemma~21]{calvo2020gauge} against the backdrop of \textit{contact Calabi-Yau} manifolds. For a bundle $E\to \V$, in Lemma  \ref{lem:sdci g2} we prove that if the connection $A$ is SDCI then it is a $\gt$-instanton. Conversely, for a holomorphic Sasakian bundle $E\to \V$, if the Chern connection is a $\gt$-instanton, then it is a SCDI. Finally, we prove that HYM connections on a holomorphic bundle  $E\to G_{2}(\R^5)$ on the Grassmannian, pulling back to $\gt$-instantons on $\pi^\ast E\to \V$ (cf. Corollary \ref{cor:tHYM}). We point out that the key argument can be carried out under the sole assumption of an $\mathrm{SU}(3)$-structure, thereby improving the results mentioned above.

In Section \ref{sec:homogeneousBundles}, we provide examples of homogeneous SDCI and $\gt$-instantons over $\V$, with gauge group $\SO(3)$ and $\U(1)$. In particular, we prove that the $\SO(3)$-invariant $\gt$-instanton \eqref{eq: invariant_connection} is a Yang-Mills connection on $(\V,\varphi)$, for any $\SO(5)$-invariant coclosed $\gt$-structure (cf. Theorem \ref{thm: Yang-Mills example}).

Finally, in Section~\ref{sec:spinors} we extend the spinorial framework proposed by R. Singhal \cite{singhal2022} to coclosed $\gt$–structures,    and as a concrete application in Corollary \ref{prop:rigidity}, we prove rigidity for a family of homogeneous $\gt$–instantons on the Stiefel manifold $V^{5,2}$ for a non-empty range of the metric parameter $y_2$.\\

\textbf{Outline:} In Section \ref{homogeneous space}, we provide some preliminaries about $\V$ as a homogeneous space, with the aim of characterising all the $\SO(5)$-invariant coclosed $\gt$-structures and the associated Sasakian metrics. In Section \ref{sec:InstantonCOnstruction}, we explore the interaction between SDCI and $\gt$--instantons on $\V$, in particular, Lemmata \ref{lem:sdci g2}, \ref{lem:g2 sdci} and Corollary \ref{cor:tHYM} indicate conditions such that contact instantons and  $\gt$-instantons coincide.  Finally, in Section \ref{sec:homogeneousBundles} we consider homogeneous principal bundles on $\V$ and we construct examples of invariant instantons on $\SO(3)$, $U(1)$--homogeneous bundles. In particular, in Section \ref{sec:SO(3)Bundles} we characterise all invariant $\SO(3)$--connection and we show that these satisfy the $\gt$ and SDCI condition for appropriate parameters. Finally, in Section \ref{sec:spinors} we study the deformation theory for a particular example of a  $\gt$-instanton, specifically in Corollary \ref{prop:rigidity} we prove that it is rigid for a non-empty interval of the metric parameter $y_2$.\\

\textbf{Acknowledgments and funding:} AM was funded by the São Paulo Research Foundation [2021/08026-5] and [2023/13780-6] (Fapesp). LP was funded  by Centre Henri Lebesgue (CHL) under the grant \textbf{ANR-11-LABX-0020-01}. He also thanks the LMBA at Université de Bretagne Occidentale (UBO) for its hospitality
\section{Homogeneous $\gt$ and contact metric structures on $\V$}
\label{homogeneous space}
In this section, we provide an explicit description of a $\Ad(\SO(3))$-invariant $\gt$-structure and the induced inner product on $\V$. We also establish the notation that we will use throughout  this work.
\subsection{ $\gt$-structures on $\V$} 
In order to describe the space of $\SO(5)$-invariant $\gt$-structures on  $\V=\SO(5)/\SO(3)$\footnote{This subject is discussed in depth in \cite{friedrich1997nearly}.}, we fix a reductive decomposition of $\V$ as a homogeneous space.   Consider the Lie algebra of $\SO(5)$
\begin{equation}\label{eq: so(5)_Lie_algebra}
    \so(5):=\{A\in \gl(\R^5) \; | \; A+A^T=0\},
\end{equation}

Let $E_{ij}$ denote the elementary $5\times5$ matrix with a $1$ in the $(i,j)$-entry and zeros elsewhere.
We fix the following basis of the Lie algebra $\so(5)$:

\begin{align}\label{eq: so(5)_basis}
\begin{split}
e_1=&E_{12}-E_{21},\quad e_4=E_{15}-E_{51},  \quad  e_7=E_{25}-E_{52},\quad e_{10}=E_{54}-E_{45}, \\[0.3em]
e_2=&E_{13}-E_{31},\quad  e_5=E_{23}-E_{32}, \quad  e_8=E_{34}-E_{43}, \quad \\[0.3em]
e_3=&E_{14}-E_{41},\quad e_6=E_{24}-E_{42},  \quad  e_9=E_{35}-E_{53}.
\end{split}
\end{align} 
We consider the Lie algebra  $\so(3)$ embedded in $\so(5)$ which is spanned by
$\{e_8,e_9,e_{10}\}$. The Lie bracket of $\so(5)$ is given by the commutator described in Table \ref{tab:bracket}. 
\begin{table}[ht]
\begin{center}
\newcolumntype{M}{>{$}c<{$}}
\begin{tabular}{|M||M|M|M|M|M|M|M|M|M|M|}
 \hline
 [\cdot,\cdot]  & e_1 & e_2 & e_3   & e_4   & e_5   & e_6   & e_7   & e_8   & e_9   & e_{10}\\  \hline \hline     
           e_1  & 0   &-e_5 &-e_6   &-e_7   & e_2   & e_3   & e_4   & 0     & 0     & 0     \\  \hline
           e_2  & e_5 & 0   &-e_8   &-e_9   &-e_1   & 0     & 0     & e_3   & e_4   & 0     \\  \hline
           e_3  & e_6 & e_8 & 0     &-e_{10}& 0     &-e_1   & 0     &-e_2   & 0     & e_4   \\  \hline
           e_4  & e_7 & e_9 & e_{10}& 0     & 0     & 0     &-e_{1} & 0     &-e_2   &-e_3   \\  \hline
           e_5  &-e_2 & e_1 & 0     & 0     & 0     &-e_8   &-e_9   & e_{6} & e_7   & 0     \\  \hline
           e_6  &-e_3 &  0  & e_1   & 0     & e_8   & 0     &-e_{10}&-e_5   & 0     & e_7   \\  \hline
           e_7  &-e_4 &  0  & 0     & e_1   & e_9   & e_{10}& 0     & 0     &-e_5   &-e_6   \\  \hline
           e_8  & 0   &-e_3 & e_2   & 0     &-e_6   & e_5   & 0     & 0     &-e_{10}& e_9   \\  \hline
           e_9  & 0   &-e_4 & 0     & e_2   &-e_7   & 0     & e_5   & e_{10}& 0     &-e_8   \\  \hline
        e_{10}  & 0   & 0   &-e_4   & e_3   & 0     &-e_7   & e_6   &-e_9   & e_8   & 0     \\
\hline
\end{tabular}
\caption{\label{tab:bracket}Bracket of $\so(5)$ with respect to the basis \eqref{eq: so(5)_basis}}
\end{center}
\end{table}

On $\so(5)$, we fix the inner product $\langle a,b\rangle=Q(a,b)=\frac{1}{2}\tr(ab^T),$ 
with respect to which, we have  the orthogonal splitting 
\begin{equation}\label{eq: reductive decomposition so(5)}
\so(5)= \fm\oplus\so(3)\qwhereq \fm:=\so(3)^\perp=\Span\{e_1,...,e_{7}\}.
\end{equation}
Note that in terms of the basis \eqref{eq: so(5)_basis}, the restriction of $\ad(e_k)_{\fm}$ to $\fm$ are given by   
\begin{align*}
\ad(e_8)_\fm    &=\diag(0,A_1,A_1),\\
\ad(e_9)_\fm    &=\diag(0,A_2,A_2),\\
\ad(e_{10})_\fm &=\diag(0,A_3,A_3),
\end{align*}  
where 
{\footnotesize $
A_1=\begin{pmatrix}
  0 &-1 & 0 \\
  1& 0 & 0  \\
  0 & 0 & 0    
\end{pmatrix}
$, \quad  $
A_2=\begin{pmatrix}
  0 &0&-1 \\
  0& 0 & 0  \\
  1& 0 & 0    
\end{pmatrix}
$    and $
A_3=\begin{pmatrix}
  0& 0 & 0  \\
  0& 0 &-1  \\
  0& 1 & 0   
\end{pmatrix}
$ }
are the canonical basis of $\so(3)$. 
Denote by 
$ 
\fm_1=\Span\{e_{1}\},$  $\fm_2:=\Span\{e_2,e_3,e_4\}$ and $ \fm_3:=\Span\{e_5,e_6,e_7\}.
$
By using the Lie bracket in Table \ref{tab:bracket}, it is straightforward to verify  that
\begin{equation*}
    \ad(\so(3)) \fm_1=0, \quad \ad(\so(3)) \fm_2\subseteq \fm_2  \qandq
    \ad(\so(3)) \fm_3\subseteq \fm_3.  
\end{equation*}
Consequently, for the reductive decomposition \eqref{eq: reductive decomposition so(5)} we obtain the irreducible $\ad(\so(3))$-submodules 
\begin{equation}\label{eq: irreducible so(3)-decomposition}
    \fm= \fm_1\oplus\fm_2 \oplus\fm_3.
\end{equation}
The subalgebra $\so(2)\oplus\so(3)=\Span\{e_1,e_8,e_9,e_{10}\}\subset\so(5)$ provides a reductive decomposition $\so(5)=\so(2)\oplus\so(3)\oplus\fp$ (where $\fp=\fm_2\oplus\fm_3$) for the symmetric space 
\begin{equation}\label{eq:grasmanian}
    G_{5,2}=\frac{\SO(5)}{\SO(2)\times\SO(3)}.
\end{equation}
Furthermore, $\fp\simeq \C^3$ with respect to the canonical almost complex structure
$J=\begin{pmatrix}0&-I_3 \\I_3&0\end{pmatrix}$
and it turns out that   $\fm_3=\Span\{Je_2,Je_3,Je_4\}$. With respect to the basis \eqref{eq: so(5)_basis}, we define the alternating forms on $\fp$ given by
\begin{equation}\label{eq: su3-structure}
\begin{array}{rl}
\omega_0     &=e^{25}+e^{36}+e^{47},           \\[4pt]
\Im\Omega_0&=e^{237}-e^{246}+e^{345}-e^{567},\\[4pt]
\Re\Omega_0&=e^{234}-e^{267}+e^{357}-e^{456}.
\end{array}
\end{equation}
Notice that $\omega_0\wedge\Re\Omega_0=0$ and $2\omega_0^3=3\Re\Omega_0\wedge\Im\Omega_0\neq 0$, i.e. $(\omega_0,\Re\Omega_0)$ is a \emph{$\SU(3)$-structure} on $\fp$ (with induced orientation $-e^{234567}$). 
\begin{lemma}\label{lemma:su3-struct}
The forms in \eqref{eq: su3-structure} are $\ad(\so(2)\oplus\so(3))$-invariant, therefore, they induce a $\SO(5)$-invariant $\SU(3)$-structure $(\omega,\Re\Omega)$ on $G_{2}(\R^5)$. Furthermore, we have that $d\omega=0,$ $d(\Re\Omega)=3e^1\wedge\Im\Omega$ and $d(\Im\Omega)=-3e^1\wedge\Re\Omega.$ 
\end{lemma}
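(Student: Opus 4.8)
The plan is to translate everything into linear algebra on $\fm^*$ using the bracket of Table~\ref{tab:bracket}. Recall that $\SO(5)$-invariant forms on $\V=\SO(5)/\SO(3)$ are the $\ad(\so(3))$-invariant elements of $\Lambda^\bullet\fm^*$, and that such a form descends to $G_{2}(\R^5)=\V/S^1$ precisely when it is also $\ad(\so(2))$-invariant, where $\so(2)=\fm_1=\Span\{e_1\}$. Since the three forms in \eqref{eq: su3-structure} lie in $\Lambda^\bullet\fp^*$ with $\fp=\fm_2\oplus\fm_3$, I would first settle their invariance and then compute the three differentials.

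For invariance I would argue through the $\su(3)$-module structure. By the formulas $\ad(e_8)_\fm=\diag(0,A_1,A_1)$, $\ad(e_9)_\fm=\diag(0,A_2,A_2)$, $\ad(e_{10})_\fm=\diag(0,A_3,A_3)$, the algebra $\so(3)$ acts by the same rotation on $\fm_2$ and on $\fm_3$; under the identification $\fp\cong\C^3$ afforded by $J$ (so that $\fm_3=J\fm_2$) this action is complex-linear and orthogonal, hence lies in $\su(3)$. As $(\omega_0,\Omega_0=\Re\Omega_0+i\,\Im\Omega_0)$ is the standard $\SU(3)$-structure, all three forms are $\ad(\so(3))$-invariant and therefore define $\SO(5)$-invariant forms on $\V$; if preferred, this is the finite verification $\ad(e_k)\cdot\omega_0=\ad(e_k)\cdot\Re\Omega_0=\ad(e_k)\cdot\Im\Omega_0=0$ for $k=8,9,10$ from the table. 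Reading off $\ad(e_1)|_\fp=-J$, the $(1,1)$-form $\omega_0$ is annihilated, so $\omega_0$ is $\ad(\so(2)\oplus\so(3))$-invariant and descends to the invariant Kähler form $\omega$ on $G_{2}(\R^5)$; the $(3,0)$-form $\Omega_0$ is instead rotated by a phase, so $\Re\Omega_0,\Im\Omega_0$ are $\ad(\so(3))$-invariant and are turned into one another by $\ad(\so(2))$. This last observation is precisely the infinitesimal datum that will produce the $e^1$-terms.

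For the differentials I would use the algebraic differential $d_\fm$ on $\Lambda^\bullet\fm^*$ defined by $d_\fm e^k=-\tfrac12\sum_{i,j}\langle[e_i,e_j]_\fm,e_k\rangle\,e^{i}\wedge e^{j}$ and extended as a derivation, which agrees with the exterior derivative on $\ad(\so(3))$-invariant forms. Table~\ref{tab:bracket} gives $d e^1=e^{25}+e^{36}+e^{47}=\omega_0$ and $de^a$ lying in $e^1\wedge\fp^*$ for $a\in\{2,\dots,7\}$. Hence $\omega_0=de^1$ is exact (here $e^1$ is a genuine invariant $1$-form, since $\fm_1$ is a trivial $\so(3)$-module) and $d\omega_0=0$. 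Moreover, because each $de^a$ with $a\ge 2$ carries exactly one factor $e^1$ while $de^1\in\Lambda^2\fp^*$ carries none, every monomial of $d\Re\Omega_0$ and of $d\Im\Omega_0$ contains exactly one $e^1$; thus $d\Re\Omega_0=e^1\wedge\beta$ and $d\Im\Omega_0=e^1\wedge\gamma$ with $\beta,\gamma\in\Lambda^3\fp^*$. Contracting with $e_1$ and using Cartan's formula together with $\iota_{e_1}\Re\Omega_0=\iota_{e_1}\Im\Omega_0=0$ gives $\beta=\cL_{e_1}\Re\Omega_0$ and $\gamma=\cL_{e_1}\Im\Omega_0$, which are exactly the $\so(2)$-variations found above; since $\Omega_0$ has type $(3,0)$ these equal $3\,\Im\Omega_0$ and $-3\,\Re\Omega_0$, yielding $d\Re\Omega=3\,e^1\wedge\Im\Omega$ and $d\Im\Omega=-3\,e^1\wedge\Re\Omega$. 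One can instead expand the two $3$-forms monomial by monomial from the $de^a$, reaching the same identities.

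I expect the difficulty to be bookkeeping rather than conceptual. The direct expansion of $d\Re\Omega_0,d\Im\Omega_0$ runs over about a dozen signed monomials each, and the overall sign of the $e^1$-terms is tied to the reductive/orientation convention (equivalently to the sign in $de^1=\pm\omega_0$); I would fix it once through $\omega=\tfrac12 d\eta$ with $\eta\propto e^1$ and then propagate it. The relations $\omega_0\wedge\Re\Omega_0=\omega_0\wedge\Im\Omega_0=0$ recorded before the lemma force $d^2\Re\Omega_0=d^2\Im\Omega_0=0$ and thus give a convenient consistency check on all the signs.
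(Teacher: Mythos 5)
Your proposal is correct and follows the same overall strategy as the paper: reduce everything to $\ad$-computations on $\fm$ at the identity coset, exploit the identification $\fp\cong\C^3$ carrying the standard $\SU(3)$-structure, and read the differentials off the structure constants of Table \ref{tab:bracket}. Two differences are worth recording. First, on invariance: the paper asserts that $\ad(e_1)_\fp=J$ lies in $\fsl(\C^3)\cap\fraksp(\R^6)$ and hence fixes all three forms; but $J=\left(\begin{smallmatrix}0&-I_3\\ I_3&0\end{smallmatrix}\right)$ has off-diagonal block $I_3\notin\fsl(\R^3)$, so $J$ does not lie in $\fsl(\C^3)$ as the paper defines it, and indeed $\ad(e_1)$ preserves only $\omega_0$ while rotating the $(3,0)$-form $\Omega_0$ by a phase --- exactly as you observe. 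Your version is the internally consistent one: if $\Re\Omega_0$ were $\ad(e_1)$-invariant, then contracting $d\Re\Omega=3e^1\wedge\Im\Omega$ with $e_1$ and using Cartan's formula would force the right-hand side to vanish. So your argument establishes $\ad(\so(2)\oplus\so(3))$-invariance for $\omega_0$ and $\ad(\so(3))$-invariance plus the $\SO(2)$-phase rotation for $\Re\Omega_0,\Im\Omega_0$, which is what the rest of the paper actually uses (the $3$-forms live on $\V$, not on the Grassmannian). Second, on the differentials: the paper computes them monomial by monomial, whereas your route through $de^1=\omega_0$, $de^a\in e^1\wedge\fp^\ast$ and $\iota_{e_1}d=\cL_{e_1}$ packages the factor $3$ and the $\Re\leftrightarrow\Im$ swap as the infinitesimal phase rotation of a $(3,0)$-form on $\C^3$; this is cleaner and also explains why the two signs must be opposite. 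The only loose end is the overall sign of the $e^1$-terms: the consistency check $d^2\Re\Omega_0=0$ that you propose is passed for either choice of overall sign (it reduces to $\omega_0\wedge\Im\Omega_0=0$ regardless), so it does not pin the sign down; you do need to expand one explicit monomial, e.g.\ $d(e^{234})$, against Table \ref{tab:bracket} to fix it. With that single computation added, the proof is complete.
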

\begin{proof}
The Lie algebras  $\fsl(\C^3) $ and $ \fs\fp(\R^6)$ are  described by:\\
$$
\fsl(\C^3)=\left\{\begin{pmatrix}B&-C\\C&B\end{pmatrix}\in\gl(\fp) \Big\vert   B,C\in\fsl(\R^3)\right\},
\quad
\fs\fp(\R^6)=\left\{ \begin{pmatrix}B&C\\D&-B^T\end{pmatrix}\in\gl(\fp) \Big\vert  C,D\in\sym(\R^3)\right\}.
$$ 
The Lie algebras $\fsl(\C^3)$ and $\fs\fp(\R^6)$  can be interpreted as stabilisers 
$$ 
\frak{stab}_{\gl(\fp)}(\omega_0)=\fsl(\C^3)
$$  
and
$$  
\frak{stab}_{\gl(\fp)}(\Re\Omega_0)=\frak{stab}_{\gl(\fp)}(\Im\Omega_0)=\fraksp(\R^6).
$$  
Notice that the endomorphisms $\ad(e_8)_\fp$, $\ad(e_9)_\fp$, $\ad(e_{10})_\fp$ and 
$\ad(e_1)_\fp=J$
belong to the intersection  $\fsl(\C^3)\cap\fraksp(\R^6)$. Therefore, the forms in \eqref{eq: su3-structure} are $\ad(\so(2)\oplus\so(3))$-invariant. Now, for any $p=x\cdot\Big(\SO(2)\times\SO(3)\Big)\in G_{5,2}$ define $(\omega,\real\Omega)$ by
$$ 
    \omega_p=\rL_x^\ast\omega_0 \qandq \real\Omega_p=\rL_x^\ast\real\Omega_0,
$$
where $\rL_x$ is the left multiplication by $x\in \SO(5)$, hence $2\omega^3=3\Re\Omega\wedge\Im\Omega\neq 0$ . Finally, from Table \ref{tab:bracket} follows that $d\omega_0=0,$ $d(\Re\Omega_0)=3e^1\wedge\Im\Omega_0$ and $d(\Im\Omega_0)=-3e^1\wedge\Re\Omega_0$. 
\end{proof}  
The following result is mentioned in \cite{kerr1998new}*{\S 4}. For the sake of completeness,  we include the detailed proof.
\begin{lemma}\label{lem:invMetrics}
Any $\Ad(\SO(3))$-invariant metric on $\V$  is isometric to a diagonal one, i.e., up to isometries,  it is  enough to consider the class of invariant  metrics  determined by the $\Ad(\SO(3))$-invariant inner product: 
\begin{equation}\label{eq:sasakianmetric} g=y_1Q\vert_{\fm_1}+y_2Q\vert_{\fm_2}+y_3Q\vert_{\fm_3},\qwhereq y_i>0\quad i=1,2,3.    
\end{equation}      
\end{lemma}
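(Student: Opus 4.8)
The plan is to classify the $\Ad(\SO(3))$-invariant symmetric bilinear forms on $\fm$ by Schur's lemma, and then to remove the one possible off-diagonal term by an explicit isometry coming from the circle generated by $e_1$.

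First I would record the module structure of the isotropy representation. By \eqref{eq: irreducible so(3)-decomposition} we have $\fm=\fm_1\oplus\fm_2\oplus\fm_3$, where $\fm_1$ is the trivial $\so(3)$-module (since $\ad(\so(3))\fm_1=0$) and $\fm_2,\fm_3$ are each the standard $3$-dimensional representation of $\so(3)$: from the bracket table $\ad(e_8),\ad(e_9),\ad(e_{10})$ act by the same matrices $A_1,A_2,A_3$ on both summands, so the basis identification $e_i\mapsto e_{i+3}$ $(i=2,3,4)$ is an isomorphism $\fm_2\cong\fm_3$. The standard representation is absolutely irreducible, hence $\End_{\SO(3)}$ of each of $\fm_2,\fm_3$ is $\R$ and $\Hom_{\SO(3)}(\fm_2,\fm_3)\cong\R$, whereas $\Hom_{\SO(3)}(\fm_1,\fm_2)=\Hom_{\SO(3)}(\fm_1,\fm_3)=0$ because the trivial and standard modules are inequivalent. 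Consequently every $\Ad(\SO(3))$-invariant inner product $g$ on $\fm$ is block-diagonal for the splitting $\fm_1\oplus(\fm_2\oplus\fm_3)$, equals $y_1Q$ on $\fm_1$, and on $\fm_2\oplus\fm_3$ is governed by a positive-definite symmetric matrix $M=\begin{pmatrix}\alpha&\gamma\\\gamma&\beta\end{pmatrix}$, in which $\alpha,\beta$ are the coefficients of $Q|_{\fm_2},Q|_{\fm_3}$ and $\gamma$ is the coefficient of the single invariant coupling between $\fm_2$ and $\fm_3$ supplied by the isomorphism above. This recovers the $4$-parameter family of invariant metrics, the diagonal ones being exactly those with $\gamma=0$.

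Next I would produce the isometry that kills $\gamma$. Since the bracket table gives $[e_1,e_8]=[e_1,e_9]=[e_1,e_{10}]=0$, the element $e_1$ centralises $\so(3)$, so $\exp(te_1)$ normalises $\SO(3)$ and the right translation $R_{\exp(te_1)}$ is a well-defined diffeomorphism of $\V$ commuting with the $\SO(5)$-action; it therefore carries each $\SO(5)$-invariant metric to an isometric $\SO(5)$-invariant metric, its effect on $\fm$ being $\Ad(\exp(te_1))=\exp(t\,\ad e_1)$. By Lemma \ref{lemma:su3-struct}, $\ad(e_1)$ vanishes on $\fm_1$ and acts as the complex structure $J$ on $\fp=\fm_2\oplus\fm_3$, so the induced map fixes $\fm_1$ and acts on $\fm_2\oplus\fm_3$ as the rotation $R_t=\begin{pmatrix}\cos t&-\sin t\\\sin t&\cos t\end{pmatrix}$ interchanging the two summands (each entry understood as $\times I_3$).

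Finally I would conclude with the spectral theorem. Under this rotation the Gram matrix transforms by conjugation $M\mapsto R_t^{T}MR_t$, and in two dimensions any positive-definite symmetric $M$ is diagonalised by a suitable rotation, with positive eigenvalues $y_2,y_3$. Choosing such a $t$ forces $\gamma=0$, so $R_{\exp(te_1)}^{\ast}g=y_1Q|_{\fm_1}+y_2Q|_{\fm_2}+y_3Q|_{\fm_3}$, and since $R_{\exp(te_1)}$ is an isometry the original $g$ is isometric to this diagonal metric. The only delicate points are the Schur-theoretic bookkeeping—verifying that $\fm_2\cong\fm_3$ while both are inequivalent to $\fm_1$, so that the sole off-diagonal freedom is the single scalar $\gamma$—and recognising the circle $\exp(te_1)$ as the isometry realising the conjugation $M\mapsto R_t^{T}MR_t$; after that the argument is just the $2\times2$ spectral theorem.
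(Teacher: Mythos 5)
Your proposal is correct and follows essentially the same route as the paper: both identify the $4$-parameter family of invariant inner products from the isotropy decomposition $\fm=\fm_1\oplus\fm_2\oplus\fm_3$ with $\fm_2\simeq\fm_3$, and both remove the single off-diagonal coupling by conjugating with the circle $\Ad(\exp(te_1))$ arising from $N(\SO(3))/\SO(3)\cong\SO(2)$. The only differences are presentational: you spell out the Schur-lemma bookkeeping that the paper leaves implicit, and you invoke the $2\times 2$ spectral theorem where the paper computes $x(t)=\tfrac{(y_3-y_2)}{2}\sin(2t)+x\cos(2t)$ explicitly and solves for $t$.
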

\begin{proof}
In the decomposition \eqref{eq: irreducible so(3)-decomposition} we have $\fm_2\simeq\fm_3$, hence,   any $\SO(5)$-invariant  metric on $\V$ is determined by the $\Ad(\SO(3))$-invariant inner product $\langle\cdot,\cdot\rangle=Q(g\cdot,\cdot)$, where $g$ is a positive symmetric matrix of the form
$$ 
g=\begin{pmatrix}
 y_1 &0         & 0       \\
   0 & y_2I_{3} & xI_{3}  \\
   0 & xI_{3}   & y_3I_{3}    
\end{pmatrix} 
\qforq y_1,y_2,y_3>0, \quad x\in \R \qwithq y_2y_3>x^2.
$$
Denote by  $N(\SO(3))$  the normalizer of $\SO(3)$ in $\SO(5)$, since the quotient $\frac{N(\SO(3))}{\SO(3)}$  is isomorphic to $ \SO(2)$ the conjugation induces a 1-parameter family of diffeomorphisms on $\fm$ given by $\phi(t)=\Ad(\exp(te_1))$. Thus, for $t\in \R$ the one-parameter  family 
$ 
g_t=\phi(t)^T g \phi(t)  
$ 
defines an isometric $\Ad(\SO(3))$-inner product. 

A direct computation shows that the parameter $x(t)$ in the metric $g_t$, is given by
$x(t)=\frac{(y_3-y_2)}{2}\sin(2t)+x\cos(2t)$. Hence,  $g_t$ is diagonal if: 
$$
y_2\neq y_3 \qandq t=\frac12\arctan\left(\frac{2x}{y_2-y_3}\right)
$$
or
$$
y_2=y_3 \qandq t=\frac{(2k+1)\pi}{4} \qwithq k\in \N,
$$
\end{proof}
With respect to \eqref{eq:sasakianmetric}, we define an orthonormal basis of $\fm$:
\begin{equation}\label{eq: on-basis_so5}
X_1:=\frac{1}{\sqrt{y_1}}e_1, \quad X_i:=\frac{1}{\sqrt{y_2}}e_i \qandq X_{i+3}:=\frac{1}{\sqrt{y_3}}e_{i+3}\qforq  i=1,2,3.
\end{equation} 
Then, the Lie bracket in  Table \ref{tab:bracket} becomes
\begin{equation}\label{eq:brack}
\begin{split}
 [X_1,X_i]_{\fm} =-\sqrt{\frac{y_3}{y_1y_2}}X_{i+3}, \quad
 [X_1,X_{i+3}]_{\fm} =\sqrt{\frac{y_2}{y_1y_3}}X_i, \quad
[X_{i},X_{i+3}]_{\fm}=-\sqrt{\frac{y_1}{y_2y_3}}X_1.   
\end{split} 
\end{equation}
In \cite{reidegeld2010spaces}, Reidegeld proved that the space of $\SO(5)$-invariant $\gt$-structures on $\V$ has dimension $5$. The next Lemma describes the invariant $\gt$-structures on the Stiefel manifold up to \emph{$\gt$-equivalence}, i.e. for a $\gt$-structure $\varphi$ written in the basis $\{e_1,...,e_7\}$ and $\{\Tilde{e}_1,...,\Tilde{e}_7\}$ of $\fp$, there is a change of basis $f\in \gt$ from $\{e_1,...,e_7\}$ to $\{\Tilde{e}_1,...,\Tilde{e}_7\}$.
\begin{lemma}\label{lem: SO(5) invariant G2 structures}
The $5$-dimensional family of $\SO(5)$-invariant $\gt$-structures on $\V$ is described, up to $\gt$-equivalence,  by $(a,b,x,y,z)\in \R^5$ satisfying 
\begin{equation}\label{eq: open G2 condition}
     a^2+by>0, \quad b^2+ax>0 \quad z\neq 0 \qandq xy=ab.
\end{equation}
Explicitly, the $\gt$-structure $\varphi$ can be written as
\begin{align}\label{eq: on G2-form}
\varphi=&
-X^1\wedge(X^{25}+X^{36}+X^{47})+p_1X^{234}+p_2(-X^{267}+X^{357}-X^{456})\\ &+p_3(X^{237}-X^{246}+X^{345})-p_4X^{567},\nonumber
\end{align}
where
\begin{equation}
p_1=\frac{x\sqrt{a^2+by}}{b^2+ax}, \quad p_2=\frac{a}{\sqrt{a^2+by}}, \quad p_3=\frac{b}{\sqrt{b^2+ax}} \qandq p_4=\frac{y\sqrt{b^2+ax}}{a^2+by}
\end{equation}
     and
     \begin{align}\label{eq: G2 metric coefficients}
         y_1=\frac{z^2}{y_2y_3}, \quad y_2=\frac{(b^2+ax)^{2/3}}{(a^2+by)^{1/3}} \qandq y_3=\frac{(a^2+by)^{2/3}}{(b^2+ax)^{1/3}}.
     \end{align}
 \end{lemma}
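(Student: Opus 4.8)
The plan is to reduce the statement to linear algebra on the reductive complement $\fm$ and then to apply Hitchin's recipe for the metric of a stable $3$-form. An $\SO(5)$-invariant $\gt$-structure is the same datum as an $\Ad(\SO(3))$-invariant positive $3$-form $\varphi\in\Lambda^3\fm^\ast$, so the first task is to pin down the space of $\Ad(\SO(3))$-invariant $3$-forms. Using the irreducible decomposition \eqref{eq: irreducible so(3)-decomposition}, in which $\fm_1$ is the trivial module and $\fm_2\cong\fm_3$ are copies of the standard representation, I would decompose $\Lambda^3\fm$ into isotypic components and apply Schur's lemma. The trivial summands come from $\Lambda^3\fm_2$, $\Lambda^3\fm_3$, $\fm_1\otimes(\fm_2\wedge\fm_3)$, $\fm_2\otimes\Lambda^2\fm_3$ and $\Lambda^2\fm_2\otimes\fm_3$, each contributing one dimension; this recovers Reidegeld's count \cite{reidegeld2010spaces} of a $5$-dimensional space, with explicit basis $e^1\wedge\omega_0$, $e^{234}$, $\psi_2:=-e^{267}+e^{357}-e^{456}$, $\psi_3:=e^{237}-e^{246}+e^{345}$ and $e^{567}$, where $\omega_0$ is as in \eqref{eq: su3-structure}. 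Thus every invariant $3$-form may be written $\varphi=-z\,e^1\wedge\omega_0+x\,e^{234}+a\,\psi_2+b\,\psi_3-y\,e^{567}$ for $(a,b,x,y,z)\in\R^5$, the labelling being chosen so that the final coefficients come out clean.

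Next I would compute the induced metric and decide positivity. Writing $\varphi=-z\,e^1\wedge\omega_0+\rho$ with $\rho\in\Lambda^3\fp^\ast$ a $3$-form on $\fp=\fm_2\oplus\fm_3$, Hitchin's formula $g_\varphi(u,v)\,\vol_\varphi=\tfrac{1}{6}\,(u\lrcorner\varphi)\wedge(v\lrcorner\varphi)\wedge\varphi$ produces a $\Lambda^7\fm^\ast$-valued symmetric form $B_\varphi$. By $\Ad(\SO(3))$-invariance and Schur's lemma, $B_\varphi$ is a scalar on $\fm_1$, a multiple of the identity on each of $\fm_2,\fm_3$, and has a single scalar $\fm_2$–$\fm_3$ block; I would compute these four scalars as quadratic polynomials in $(a,b,x,y,z)$. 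Two facts then have to be extracted from this computation: that $e_1\lrcorner\varphi=-z\,\omega_0$, so $z\neq0$ is forced for non-degeneracy; and that the off-diagonal $\fm_2$–$\fm_3$ block is proportional to $xy-ab$, so it vanishes exactly when $xy=ab$.

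The reduction to that slice is where the phrase ``up to $\gt$-equivalence'' enters. The normaliser quotient $N(\SO(3))/\SO(3)\cong\SO(2)$ acts on invariant forms through $\phi(t)=\Ad(\exp(te_1))$, which fixes $e^1$ and acts on $\fp\cong\C^3$ as the complex structure $J=\ad(e_1)_\fp$; since $\omega_0$ is $J$-invariant, $\phi(t)$ fixes $e^1\wedge\omega_0$ (so $z$ is an invariant) and rotates the phase of the $(3,0)$-part, mixing $(e^{234},\psi_2)$ with $(\psi_3,e^{567})$. As $\phi(t)$ is induced by conjugation inside $\SO(5)$ it carries invariant $\gt$-structures to equivalent ones, and a suitable $t$ brings any $\varphi$ onto the slice $xy=ab$, equivalently (by the previous step) to the case where $g_\varphi$ is diagonal. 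On this slice $B_\varphi$ is block-diagonal, and normalising by $(\det B_\varphi)^{-1/9}$, so that $\vol_{g_\varphi}=\vol_\varphi$, yields the diagonal invariant metric of Lemma \ref{lem:invMetrics} with the coefficients $y_1,y_2,y_3$ of \eqref{eq: G2 metric coefficients}. Demanding $y_1,y_2,y_3>0$ — the precise condition for $B_\varphi$ to be definite, hence for $\varphi$ to be a genuine $\gt$-structure — returns exactly the open conditions \eqref{eq: open G2 condition}.

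Finally, passing to the $g_\varphi$-orthonormal coframe $X^1=\sqrt{y_1}\,e^1$, $X^i=\sqrt{y_2}\,e^i$, $X^{i+3}=\sqrt{y_3}\,e^{i+3}$ dual to \eqref{eq: on-basis_so5} and substituting the $y_i$ turns the five coefficients $(-z,x,a,b,-y)$ into $(-1,p_1,p_2,p_3,-p_4)$ with the stated $p_i$, which is precisely \eqref{eq: on G2-form}; this last step is a direct substitution, the homogeneity in each $y_i$ being arranged so that the radicals cancel. I expect the main obstacle to be the middle computation: carrying out the degree-$7$ wedge bookkeeping in Hitchin's formula to obtain the four blocks of $B_\varphi$ in closed form, verifying that the off-diagonal block is a multiple of $xy-ab$, and then correctly implementing the $(\det B_\varphi)^{-1/9}$ normalisation to land on the clean expressions for $y_1,y_2,y_3$. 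A secondary point requiring care is the justification that the $\SO(2)$ normaliser action is a legitimate $\gt$-equivalence, so that restricting to $xy=ab$ loses no structures; this is exactly the mechanism that reduces the $5$-dimensional family of forms to the $4$-dimensional slice described in the statement.
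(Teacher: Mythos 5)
Your proposal is correct and follows essentially the same route as the paper: parametrise the $5$-dimensional space of $\Ad(\SO(3))$-invariant $3$-forms (the paper asserts this directly from \eqref{eq: su3-structure}, you derive it via Schur's lemma), compute the Hitchin bilinear form $B_{ij}=\bigl((e_i\lrcorner\varphi)\wedge(e_j\lrcorner\varphi)\wedge\varphi\bigr)(e_1,\dots,e_7)$ whose off-diagonal $\fm_2$--$\fm_3$ block is a multiple of $ab-xy$, use the normaliser action $\phi(t)=\Ad(\exp(te_1))\in\SU(3)\subset\gt$ to reach the diagonal slice, normalise by $\det(B)^{-1/9}$ to read off $y_1,y_2,y_3$, and rewrite $\varphi$ in the orthonormal coframe to obtain the $p_i$. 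The only difference is expository: you spell out the isotypic decomposition and the Schur-type block structure that the paper leaves as ``easy to check,'' which changes nothing of substance.
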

\begin{proof}
From \eqref{eq: su3-structure} it  is straightforward to check  that any $\Ad(\SO(3))$-invariant $3$-form on $\fm$ is given by 
\begin{align*}
\wt\varphi=&-\wt ze^1\wedge(e^{25}+e^{36}+e^{47})+\wt x e^{234}+\wt  a(-e^{267}+e^{357}-e^{456})\\ 
&+\wt b(e^{237}-e^{246}+e^{345})-\wt y e^{567}.\nonumber
\end{align*}
The $3$-form $\wt\varphi$ defines a $\gt$-structure if the induced symmetric bilinear  form  
\begin{equation*}
B_{ij}=\Big((e_i\lrcorner\wt{\varphi})\wedge(e_j\lrcorner\wt{\varphi})\wedge\wt{\varphi}\Big)(e_1,...,e_{7}) 
\end{equation*}
is positive (or negative) definite. 
Explicitly, we obtain that 
$ 
B_{11}=6\wt z^3 
$ 
and for  $k=2,3,4$ 
$$B_{kk}=6\wt z(\wt{b}^2+\wt a\wt x), \quad B_{k+3 k+3}=6\wt{z}(\wt{a}^2+\wt{b}\wt{x}), \quad B_{k k+3}=3\wt{z}(\wt{a}\wt{b}-\wt{x}\wt{y})$$
and $B_{ ij}=0$ otherwise. 

Now, since $\phi(t)=\Ad(\exp(te_1))\in \SU(3)\subset \gt$ [cf. Lemma \ref{lem:invMetrics}], there exist $t_0\in \R$ such that 
\begin{align*}
    \varphi=\phi(t_0)^*\widetilde{\varphi}=&-z e^1\wedge(e^{25}+e^{36}+e^{47})+xe^{234}+a(-e^{267}+e^{357}-e^{456})\\ &+b(e^{237}-e^{246}+e^{345})-y e^{567},\nonumber
\end{align*}
satisfies \eqref{eq: open G2 condition}. On the other hand, the induced metric is given by
$$
g=\frac{B}{6^{\frac29}\det(B)^{\frac19}},
$$  
hence, by comparing $g$ with \eqref{eq:sasakianmetric}, we obtain the conditions in \eqref{eq: G2 metric coefficients}. Finally, the expression in \eqref{eq: on G2-form} is obtained by  writing $\varphi$ in the orthonormal basis \eqref{eq: on-basis_so5}.
 \end{proof}
\begin{definition}
A $\gt$-structure $\varphi$ is called \emph{coclosed} if $d\psi=0$. Among the class of coclosed class, $\varphi$ is said \emph{purely coclosed}  if $d\varphi\wedge\varphi=0$ and  \emph{nearly parallel} if $d\varphi=4\psi$.
\end{definition}
From Lemma \eqref{lem: SO(5) invariant G2 structures}, the following result describes all coclosed invariant $\gt$--metrics  on $\V$
\begin{theorem}\label{thm:Nearly}
The $\gt$-structure \eqref{eq: on G2-form} is coclosed for any    $(a,b,x,y,z)\in \R^5$ satisfying \eqref{eq: open G2 condition}. Moreover, $\V$ does not carry a $\SO(5)$-invariant purely coclosed $\gt$-structure  and \eqref{eq: on G2-form} is nearly parallel if 
\begin{align}\label{eq: nearly parallel condition}
    x=a, \quad y=b, \quad a^2+b^2=\frac{27}{512} \qandq z=-\frac{9}{32}.
\end{align}
\end{theorem}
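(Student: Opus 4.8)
The plan is to push everything through the Chevalley–Eilenberg calculus on the reductive complement $\fm$, differentiating the invariant coframe by means of \eqref{eq:brack}. This gives $dX^1=\gamma\,(X^{25}+X^{36}+X^{47})$ with $\gamma=\sqrt{y_1/(y_2y_3)}\neq0$ constant, together with $dX^i=-\beta\,X^1\wedge X^{i+3}$ and $dX^{i+3}=\alpha\,X^1\wedge X^i$ for $i=2,3,4$. The first identity is the crucial observation: writing $\sigma:=X^{25}+X^{36}+X^{47}$, it says $\sigma=\gamma^{-1}dX^1$ is exact, hence $d\sigma=0$ (equivalently, $\sigma$ is up to scale the $d\eta$ of the contact form $\eta=X^1$). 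I would also record, once and for all, that as $\Ad(\SO(3))$-modules the invariant $1$- and $2$-forms on $\fm$ are each $1$-dimensional, spanned by $X^1$ and $\sigma$ respectively.

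Coclosedness is then nearly immediate and holds for \emph{all} parameters. Since $\varphi$ and the induced metric are $\SO(5)$-invariant, so is $\delta\varphi=\pm\ast d\ast\varphi$; being an invariant $2$-form it must be a multiple $\delta\varphi=c\,\sigma$. As $d$ and $\delta$ are formal adjoints and invariant inner products are constant on the homogeneous space $\V$, the pointwise identity $c\,|\sigma|^2=\langle\delta\varphi,\sigma\rangle=\langle\varphi,d\sigma\rangle$ holds; since $d\sigma=0$ this forces $c=0$, i.e. $d\ast\varphi=0$ for every $(a,b,x,y,z)$ obeying \eqref{eq: open G2 condition}. (Alternatively one computes the single coefficient of the invariant $5$-form $d\ast\varphi$ and checks it vanishes.)

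For the non-existence of a purely coclosed invariant structure I would compute the top form $d\varphi\wedge\varphi$ directly. Working in the $e$-basis, where $\varphi=-z\,e^1\wedge\omega_0+x\,e^{234}+a(-e^{267}+e^{357}-e^{456})+b(e^{237}-e^{246}+e^{345})-y\,e^{567}$ and $\omega_0=de^1$, $d\omega_0=0$, the relations of Lemma \ref{lemma:su3-struct} together with their refinements on the individual summands of $\Re\Omega_0$ and $\Im\Omega_0$ give $d\varphi=-z\,\omega_0^2+e^1\wedge\Psi$ for an explicit $e^1$-free $3$-form $\Psi$. Wedging with $\varphi$ annihilates all but two products, and a short bilinear computation yields (up to the orientation convention) $d\varphi\wedge\varphi=-6\big(z^2+(a^2+by)+(b^2+ax)\big)\,e^{1234567}$. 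Under \eqref{eq: open G2 condition} each of $z^2$, $a^2+by$, $b^2+ax$ is strictly positive, so $d\varphi\wedge\varphi\neq0$; since coclosedness gives $d\varphi=\tau_0\,\psi+\ast\tau_3$ and hence $d\varphi\wedge\varphi=7\tau_0\,\dvol$, this shows $\tau_0\neq0$ always, so no invariant purely coclosed $\gt$-structure exists.

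Finally, for nearly parallelism I would impose $d\varphi=\lambda\psi$. Since $d\varphi=\tau_0\psi+\ast\tau_3$ already, this is exactly $\tau_3=0$; matching the invariant components forces the balanced locus $x=a$, $y=b$, where $a^2+by=b^2+ax=a^2+b^2=:s^2$, $y_2=y_3=s^{2/3}$, and $\varphi=-X^1\wedge\sigma+p\,\Re\Omega+q\,\Im\Omega$ in the orthonormal basis \eqref{eq: on-basis_so5}, with $p=a/s$, $q=b/s$, so $p^2+q^2=1$ — a genuine phase-rotated $\SU(3)$-structure transverse to $X^1$. Then the standard $\SU(3)\to\gt$ Hodge relation gives $\psi=\ast\varphi=\tfrac12\,\sigma\wedge\sigma+X^1\wedge(p\,\Im\Omega-q\,\Re\Omega)$, and converting the earlier $d\varphi$ into the orthonormal basis and equating the coefficients of $\sigma\wedge\sigma$, $X^1\wedge\Re\Omega$ and $X^1\wedge\Im\Omega$ yields a consistent $\lambda$ together with the scale relations $z^2=\tfrac32(a^2+b^2)$ and $|\lambda|=3(a^2+b^2)^{1/3}/|z|$; imposing $\lambda=4$ solves to $a^2+b^2=\tfrac{27}{512}$ and $z=-\tfrac{9}{32}$, which is \eqref{eq: nearly parallel condition}. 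I expect the main obstacle to be purely bookkeeping: computing $\ast\varphi$ off the balanced locus and, above all, pinning down the orientation induced by $\varphi$ so that the signs — in particular the sign of $z$ — come out as stated. The magnitudes fall out cleanly, but carrying the sign of $z$ correctly through the Hodge star is where care is required.
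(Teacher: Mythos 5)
Your proposal is correct, and for the coclosedness claim it takes a genuinely different route from the paper. The paper writes out $\psi=\ast\varphi$ explicitly in the orthonormal coframe, tabulates the exterior derivatives of the invariant forms, and verifies $d\psi=0$ term by term; you instead note that the space of $\Ad(\SO(3))$-invariant $2$-forms on $\fm$ is the line spanned by $\sigma=X^{25}+X^{36}+X^{47}\propto dX^1$ (which is right: $\fm_2\cong\fm_3$ is the standard $\so(3)$-module $V$, and the only trivial summand in $\Lambda^2\fm^*$ sits inside $\fm_2^*\otimes\fm_3^*\cong V\otimes V$ with multiplicity one), so $\delta\varphi=c\,\sigma$ with $c$ constant, and then kill $c$ by pairing against the closed invariant form $\sigma$ and integrating over the compact homogeneous space. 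This is cleaner and makes transparent why coclosedness holds for \emph{all} admissible parameters, something the paper's computation only exhibits a posteriori. For the other two claims your route is in substance the paper's — compute $d\varphi\wedge\varphi$, then impose $d\varphi=4\psi$ — but your $e$-basis bookkeeping yields the polynomial identity $d\varphi\wedge\varphi=-6\bigl(z^2+(a^2+by)+(b^2+ax)\bigr)e^{1234567}$, which I have checked and which is correct (and equivalent to the paper's $\ast(d\varphi\wedge\varphi)$ once the volume normalization $X^{1\cdots7}=|z|y_2y_3\,e^{1\cdots7}$ is inserted); the torsion-form framing $\tau_0\neq0$ always, nearly parallel $\iff\tau_3=0$, is a useful organizing principle the paper leaves implicit. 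Your scale relations $z^2=\tfrac32(a^2+b^2)$ and $4=3(a^2+b^2)^{1/3}/|z|$ do reproduce $a^2+b^2=\tfrac{27}{512}$ and $|z|=\tfrac9{32}$. The only loose end is the one you flag yourself: pinning the sign $z=-\tfrac9{32}$ requires fixing the orientation induced by $\varphi$ (equivalently the sign of $\dvol$ relative to $e^{1\cdots7}$) before applying the Hodge star; this is routine but must be done, since both signs of $z$ give admissible $\gt$-structures and the sign of $\lambda$ in $d\varphi=\lambda\psi$ flips with the orientation.
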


\begin{proof}
From Lemma \ref{lem: SO(5) invariant G2 structures}, we get the dual 4-form $\psi=\ast\varphi$ 
\begin{align}\label{eq: 4-form psi}
\psi=&
-(X^{2356}+X^{2457}+X^{3467})-p_4X^{1234}+p_3X^1\wedge(X^{267}-X^{357}+X^{456})\\ &+p_2X^1\wedge(X^{237}-X^{246}+X^{345})-p_1X^{1567}.\nonumber
\end{align}
Using the Lie bracket \eqref{eq:brack} and the metric coefficients \eqref{eq:sasakianmetric}, we obtain the exterior derivative for the following $\Ad(\SO(3))$-forms on $\fm$:
\begin{align}\label{eq:exteriorDerivatives}
    dX^1=& \frac{z}{y_2y_3}\left(X^{25}+X^{36}+X^{47}\right)\\  \nonumber
    dX^{234}=&-\frac{y_2}{z}\left(X^{1237}-X^{1246}+X^{1345}\right), \quad 
    dX^{567}=\frac{y_3}{z}\left(X^{1267}-X^{1357}+X^{1456}\right)\\ \nonumber
    d(X^{267}&-X^{357}+X^{456})=\frac{2y_3}{z}\left(X^{1237}-X^{1246}+X^{1345}\right)-\frac{3y_2}{z}X^{1567}\\ \nonumber
    d(X^{237}&-X^{246}+X^{345})=\frac{3y_3}{z}X^{1234}-\frac{2y_2}{z}\left(X^{1267}-X^{1357}+X^{1456}\right).
\end{align}
From \eqref{eq: 4-form psi} and \eqref{eq:exteriorDerivatives}, it is easy to check that $d\psi=0$. Now, from \eqref{eq: on G2-form} we obtain
\begin{align}\label{eq: derivative varphi}
d\varphi=&\frac{2z}{y_2y_3}\left(X^{2356}+X^{2457}+X^{3467}\right)-\frac{p_1y_2+2p_2y_3}{z}\left(X^{1237}-X^{1246}+X^{1345}\right)\\ \nonumber
&+\frac{3p_2y_2}{z}X^{1567}+\frac{3p_3y_3}{z}X^{ 1234}-\frac{p_4y_3+2p_3y_2}{z}\left(X^{1267}-X^{1357}+X^{1456}\right).
\end{align}
From a direct  computation, we note that
$
\ast\left(d\varphi\wedge\varphi\right)=12+\frac{3}{y_2}+\frac{3}{y_3}\neq 0,
$ 
in other words,  $\varphi$ given in  \eqref{eq: on G2-form} is not purely coclosed. Finally, by imposing the condition $d\varphi=4\psi$, and using \eqref{eq: 4-form psi} and \eqref{eq: derivative varphi}, we obtain \eqref{eq: nearly parallel condition}.
\end{proof}

\begin{remark}
    The $S^1$-family given in \eqref{eq: nearly parallel condition} corresponds to the general construction given in \cite{alexandrov2012}*{(4.28)}.
\end{remark}
\subsection{Homogeneous Sasakian metrics on $\V$}\label{sec:Sasakian}
In this section, we study the Sasakian condition for $\SO(5)$-invariant metrics on $\V$. Firstly, we characterise the class  $(\eta,\xi,g,\Phi)$ of  contact metric structures, cf. Section \ref{sec:intro}. Secondly, in  Theorem \ref{thm:V52 Sasakian1}, we provide the constraints in the metric to get a Sasakian structures. And finally, we relate the invariant $\gt$-structures inducing a Sasakian metric. 

\begin{lemma}\label{lem:contactmetricGeneral}
The globally defined $1$-form $\eta=X^1$ and $\xi=X_1$ define  a $\SO(5)$-invariant almost contact structure on $\V$. Furthermore,  $(\eta,\xi,g,\Phi)$ defines a contact metric structure on $\V$, if and only if 
\begin{align}\label{eq:contactmetric}
g=4y_2y_3Q\vert_{\fm_1}+y_2Q\vert_{\fm_2}+y_3Q\vert_{\fm_3} \qandq \Phi(X)=[e_1,X].
\end{align}
\end{lemma}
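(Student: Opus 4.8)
The plan is to reduce everything to $\Ad(\SO(3))$-equivariant linear algebra on $\fm$: by homogeneity an invariant tensor on $\V$ is the same datum as an $\Ad(\SO(3))$-invariant tensor on $\fm$, so the two assertions (that $(\eta,\xi,\Phi)$ is an almost contact structure, and that $(\eta,\xi,g,\Phi)$ is contact metric exactly for the displayed $g$ and $\Phi$) become identities among the structure constants \eqref{eq:brack}. The only nonelementary ingredient is the exterior derivative of $\eta$, for which I would use the standard homogeneous-space formula.

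First I would settle the almost contact structure, which is metric-free. Because $\fm_1=\Span\{e_1\}$ is a trivial summand of the isotropy representation (recall $\ad(\so(3))\fm_1=0$ in \eqref{eq: irreducible so(3)-decomposition}), both $\xi=X_1$ and its dual $\eta=X^1$ are $\Ad(\SO(3))$-invariant and hence descend to global $\SO(5)$-invariant fields, with $\eta(\xi)=1$ immediate. For $\Phi$ I would take the endomorphism induced by $\ad(e_1)$: from Table \ref{tab:bracket} one checks $[e_1,e_j]\in\fm$ for all $j=1,\dots,7$, so $\ad(e_1)$ preserves $\fm$, and since $[e_1,\so(3)]=0$ it commutes with $\ad(\so(3))$ and therefore defines an invariant endomorphism field. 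The identity $\Phi^2=-\id+\eta\otimes\xi$ then holds at the Lie-algebra level: $\ad(e_1)$ annihilates $\fm_1$ and, by Lemma \ref{lemma:su3-struct}, acts as the complex structure $J$ on $\fp=\fm_2\oplus\fm_3$, so it squares to $-\id$ on $\fp$ and kills $\xi$, while $\eta$ vanishes on $\fp$.

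For the ``if and only if'' I would first compute $d\eta$ via $d\alpha(X,Y)=-\alpha([X,Y]_\fm)$ for invariant forms. By \eqref{eq:brack} the only brackets with a nonzero $X_1$-component are $[X_i,X_{i+3}]_\fm=-\sqrt{y_1/(y_2y_3)}\,X_1$, giving $d\eta=\sqrt{y_1/(y_2y_3)}\,(X^{25}+X^{36}+X^{47})$. I would then impose the contact metric normalization $2g(X,\Phi Y)=d\eta(X,Y)$ together with the compatibility identity $g(\Phi X,\Phi Y)=g(X,Y)-\eta(X)\eta(Y)$. Matching the fundamental form $\omega(X,Y)=g(X,\Phi Y)$ against $\tfrac12 d\eta$ on $X^{25}+X^{36}+X^{47}$, and using $\Phi^2=-\id$ on $\fp$, forces the single relation $y_1=4y_2y_3$, which is precisely the displayed metric; conversely, substituting $g=4y_2y_3\,Q|_{\fm_1}+y_2\,Q|_{\fm_2}+y_3\,Q|_{\fm_3}$ into both identities and computing in the orthonormal frame \eqref{eq: on-basis_so5} verifies them. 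For the uniqueness of $\Phi$ in the forward direction I would invoke Schur's lemma: an invariant $g$-skew endomorphism with $\Phi\xi=0$ and $\Phi^2=-\id$ on $\fp$ must, since $\fm_2\cong\fm_3$ are isomorphic irreducibles and $\fm_1$ is trivial, be block-proportional to $\ad(e_1)$, the remaining scale being fixed by $2g(\cdot,\Phi\cdot)=d\eta$.

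The main obstacle is the bookkeeping in this last step. Since $\Phi$ interchanges the two summands $\fm_2$ and $\fm_3$, which carry the distinct metric weights $y_2$ and $y_3$, the map $\ad(e_1)$ is $g$-skew (equivalently, $g(\cdot,\Phi\cdot)$ is alternating) only after the correct normalization, so one cannot simply read off compatibility. The precise numerical relation $y_1=4y_2y_3$ emerges exactly when one simultaneously enforces that $g(\cdot,\Phi\cdot)$ is alternating, equals $\tfrac12 d\eta$, and that $\Phi$ squares to $-\id$ on $\fp$; keeping the weights and the scale of $\Phi$ consistent across these three requirements is where the care lies, while everything else is a direct computation with Table \ref{tab:bracket} and \eqref{eq:brack}.
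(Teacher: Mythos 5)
Your proposal is correct and follows essentially the same route as the paper: compute $d\eta$ from the structure constants \eqref{eq:brack}, pin down the invariant $\Phi$ (necessarily an off-diagonal block rescaling of $\ad(e_1)$, by equivariance) from $2g(\cdot,\Phi\cdot)=d\eta$, and extract $y_1=4y_2y_3$ from $\Phi^2=-\id+\eta\otimes\xi$; you correctly identify the normalization across the $\fm_2$, $\fm_3$ blocks as the one delicate point. Your Schur-lemma justification and the trivial-summand argument for the invariance of $\eta$ and $\xi$ are left implicit in the paper, and you should also record explicitly that $\eta\wedge(d\eta)^3\neq 0$, which is immediate from your formula for $d\eta$.
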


\begin{proof}
First, note that $(\V,\eta)$ is a contact manifold, since it is clear that $\eta(\xi)=1$ and that  $\eta\wedge d\eta^3\neq 0$ follows from \eqref{eq:exteriorDerivatives}. Now, we define an $\Ad(\SO(3))$-invariant contact metric structure $(\eta,\xi,g,\Phi)$, i.e., we define an $\Ad(\SO(3))$-invariant endomorphism $\Phi$ on $\fm$ such that $\Phi^2=-I+\eta\otimes\xi$ and $d\eta(X,Y)=2g(X,\Phi(Y))$. From \eqref{eq:brack} we have
$$
d\eta=\sqrt{\frac{y_1}{y_2y_3}}\left(X^{25}+X^{36}+X^{47}\right),
$$
Hence, the condition  $d\eta(X_i,X_{i+3})=2g(X_i,\Phi(X_{i+3}))$ implies that $\Phi$ is given by
\begin{equation}\label{eq:nuevoPhi}
\Phi X_i=-\frac{1}{2}\sqrt{\frac{y_1}{y_2y_3}}X_{i+3}, \quad
\Phi X_{i+3}=\frac{1}{2}\sqrt{\frac{y_1}{y_2y_3}}X_i,\qandq \Phi\xi=0.
\end{equation}
Moreover, using the above equalities and  the condition $\Phi^2=-I+\eta\otimes\xi$, we obtain that $-\frac14\frac{y_1}{y_2y_3}=-1$, or equivalently $y_1=4y_2y_3$.  
\end{proof}

The Nijenhuis  tensor induced by $\Phi$ is defined by
\begin{equation}\label{eq:Nijenhuis}
N_\Phi(X,Y)=[\Phi X,\Phi Y]+\Phi^2[X,Y]-\Phi[\Phi X,Y]-\Phi[X,\Phi Y]. 
\end{equation}
From  \cite{Boyer2008}*{Theorem 6.5.9}, we know  that  an almost contact structure $(\xi,\eta,\Phi)$ is \emph{normal} if the Nijenhuis tensor  
satisfies \begin{equation}\label{eq:criterioN}
N_{\Phi}=-d\eta\otimes \xi.    
\end{equation} 
Following \cite{Boyer2008}*{Definition 6.5.13}, we define:

\begin{definition}
    A normal contact metric structure $\cS=(\eta,\xi,g,\Phi)$ on $M^{2n+1}$ is called Sasakian and $(M^{2n+1},\cS)$ is called a \emph{Sasakian manifold}.
\end{definition}

\begin{theorem}\label{thm:V52 Sasakian1} 
The $\SO(5)$-invariant contact metric structure  $(\eta,\xi,g,\Phi)$ given in Lemma \ref{lem:contactmetricGeneral} is a Sasakian structure, if and only if, the metric  \eqref{eq:contactmetric} satisfies 
\begin{equation}\label{eq:SasakiConditionY_i}
y_1=4y_2^2 \qandq y_2=y_3.  \end{equation} 
\end{theorem}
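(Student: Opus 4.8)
The plan is to use the normality criterion \eqref{eq:criterioN}: by the definition preceding the theorem, a normal contact metric structure is Sasakian, so it suffices to determine when $N_\Phi=-d\eta\otimes\xi$. Since the whole quadruple $(\eta,\xi,g,\Phi)$ is $\SO(5)$-invariant, the tensor $N_\Phi+d\eta\otimes\xi$ is invariant and hence determined by its value at the origin $o=e\cdot\SO(3)$. I would first note that, because $\Phi$ is parallel with respect to the canonical connection of the reductive decomposition \eqref{eq: reductive decomposition so(5)} (whose torsion at $o$ is $-[\cdot,\cdot]_\fm$), all the covariant-derivative terms in the Nijenhuis expression cancel and $N_\Phi(X,Y)$ is computed at $o$ by formula \eqref{eq:Nijenhuis} with the Lie bracket replaced throughout by its $\fm$-component $[\cdot,\cdot]_\fm$. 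This is the same convention already used to produce $d\eta$ from \eqref{eq:brack} in Lemma \ref{lem:contactmetricGeneral}, so no extra contribution from the $\so(3)$-part enters.

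Next I would record the data in the contact metric case. By Lemma \ref{lem:contactmetricGeneral} we have $y_1=4y_2y_3$, so \eqref{eq:nuevoPhi} simplifies to $\Phi X_i=-X_{i+3}$, $\Phi X_{i+3}=X_i$ (for $i=2,3,4$) and $\Phi\xi=0$, while \eqref{eq:brack} becomes $[X_1,X_i]_\fm=-\tfrac{1}{2y_2}X_{i+3}$, $[X_1,X_{i+3}]_\fm=\tfrac{1}{2y_3}X_i$ and $[X_i,X_{i+3}]_\fm=-2X_1$, all remaining $\fm$-brackets among $X_2,\dots,X_7$ being zero (as one reads off Table \ref{tab:bracket}, since $[e_i,e_j]$ lands in $\so(3)$ for the other pairs). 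Likewise $d\eta=2(X^{25}+X^{36}+X^{47})$, so $-d\eta\otimes\xi$ sends $(X_i,X_{i+3})\mapsto-2\xi$ and vanishes on every other pair of basis vectors.

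The core of the argument is then a short case check of $N_\Phi$ on basis pairs. The decisive computation is $N_\Phi(\xi,X_i)$: since $\Phi\xi=0$, the first and third terms in \eqref{eq:Nijenhuis} drop out and the remaining two give $N_\Phi(\xi,X_i)=\big(\tfrac{1}{2y_2}-\tfrac{1}{2y_3}\big)X_{i+3}$ (and symmetrically $N_\Phi(\xi,X_{i+3})=\big(\tfrac{1}{2y_2}-\tfrac{1}{2y_3}\big)X_i$). As $d\eta$ vanishes on these pairs, normality forces $\tfrac{1}{2y_2}=\tfrac{1}{2y_3}$, i.e. $y_2=y_3$. For the partner pairs one finds $N_\Phi(X_i,X_{i+3})=-2\xi$, matching $-d\eta\otimes\xi$ identically, and for all pairs lying in $\fm_2\oplus\fm_3$ that are not partners the four bracket terms vanish (because the relevant $\fm$-brackets vanish), so both sides are zero. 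Hence $N_\Phi=-d\eta\otimes\xi$ holds if and only if $y_2=y_3$; combined with the standing relation $y_1=4y_2y_3$ this is exactly \eqref{eq:SasakiConditionY_i}, namely $y_1=4y_2^2$ and $y_2=y_3$. The converse is immediate by reading the same computations with $y_2=y_3$ substituted.

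I expect the only delicate point to be the justification in the first paragraph, namely that $N_\Phi$ at the origin is governed by the $\fm$-projected brackets and receives no contribution from the $\so(3)$-components of $[e_i,e_j]$; everything after that is bookkeeping, with the single informative line being the off-diagonal value $N_\Phi(\xi,X_i)=\big(\tfrac{1}{2y_2}-\tfrac{1}{2y_3}\big)X_{i+3}$ that isolates the condition $y_2=y_3$.
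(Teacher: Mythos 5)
Your proposal is correct and follows essentially the same route as the paper: both verify the normality criterion $N_\Phi=-d\eta\otimes\xi$ by direct computation with the $\fm$-projected brackets from \eqref{eq:brack}, with the decisive step being $N_\Phi(\xi,X_i)=\bigl(\tfrac{1}{2y_2}-\tfrac{1}{2y_3}\bigr)X_{i+3}$ forcing $y_2=y_3$, and the remaining pairs checked to match identically. Your added justification that the $\so(3)$-components of the brackets do not contribute is a reasonable (and slightly more careful) elaboration of what the paper takes for granted, but it does not change the argument.
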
 
\begin{proof} 
We show that  $(\eta,\xi,g,\Phi)$ is a normal, contact metric structure, i.e., Sasakian. Using the bracket \eqref{eq:brack} and the definition of  $\Phi$ in \eqref{eq:contactmetric},  we verify that the Nijenhuis tensor \eqref{eq:Nijenhuis} satisfies \eqref{eq:criterioN}. From Lemma \ref{lem:contactmetricGeneral}, we recall that the metric $g$ is contact for $y_1=4y_2y_3$. 
First, note that
$ 
N_\Phi(\xi,Y)= \Phi^2[\xi,Y] -\Phi[\xi,\Phi Y]
$ and now, for $i=1,2,3$ we consider 
\begin{align*}
\Phi^2[\xi,X_{i}] -\Phi[\xi,\Phi X_{i}]&=-[\xi,X_{i}] -\Phi[\xi,-X_{i+3}]
=-[\xi,X_{i}]+\sqrt{\frac{y_2}{y_1y_3}}\Phi X_{i}\\ 
&=\sqrt{\frac{y_3}{y_1y_2}}X_{i+3}-\sqrt{\frac{y_2}{y_1y_3}} X_{i+3}\\
&=\left(\frac{1}{2y_2}-\frac{1}{2y_3}\right)X_{i+3}
\end{align*}
Since  $i_\xi d\eta=0$, we conclude that $y_2=y_3$. Analogously 
\begin{align*}
\Phi^2[\xi,X_{i+3}] -\Phi[\xi,\Phi X_{i+3}]&=-[\xi,X_{i+3}] -\Phi[\xi,X_{i}]
=\sqrt{\frac{y_2}{y_1y_3}} X_{i}-\sqrt{\frac{y_3}{y_1y_2}}\Phi X_{i+3}\\ 
&=\frac{1}{\sqrt{y_1}} X_{i}-\frac{1}{\sqrt{y_1}}X_{i}=0.
\end{align*}
For the missing cases,  for $X,Y\in \fm_2\oplus\fm_3$ the brackets  $[X,Y]$, $[\Phi Y,Y]$ and $[X,\Phi Y]$ are generated by  $\xi$ (cf. \eqref{eq:brack}), hence  $\Phi[X,Y]=\Phi[\Phi X,Y]=\Phi[X,\Phi Y]=0$ and thus $N_\Phi(X,Y)=[\Phi X,\Phi Y]$. So, \eqref{eq:criterioN} holds by noting that 
$$
N_\Phi(X_i,X_{i+3})=-2X_1 \qforq i=2,3,4 \qandq N_\Phi(X_i,X_j)=0 \quad \text{otherwise}.
$$
\end{proof}

According to the notation in Theorem \ref{thm:V52 Sasakian1}, the Sasakian metric  \eqref{eq:sasakianmetric} is induced by the following family of $\gt$-structures.   
\begin{corollary}\label{cor:V52 Sasakian} 
Consider the Sasakian metric on $\V$ as a metric induced by \eqref{eq: on G2-form} and the parameter vector $(z,a,b,x,y)\in \R^5$ satisfying the condition  \eqref{eq: open G2 condition}. Then, the $\gt$-structure \eqref{eq: on G2-form} induces a Sasakian metric, if and only if,
\begin{equation}\label{eq: G2 Sasakian metric}
 x=a, \quad y=b \qandq z^2=4(a^2+b^2)^{\frac43}.
 \end{equation}
In this case, the $2$-parameter family of coclosed $\gt$-structures is
\begin{align*}
\varphi=&
-X^1\wedge(X^{25}+X^{36}+X^{47})+\left(\frac{2}{z}\right)^{\frac34}\Big\{a\Big(X^{234}-X^{267}+X^{357}-X^{456}\Big) \\ 
& +b\Big(X^{237}-X^{246}+X^{345}-X^{567}\Big)\Big\},\\
\psi=&
-\Big(X^{2356}+X^{2457}+X^{3467}\Big)-\left(\frac2z\right)^{\frac34}\Big\{bX^1\wedge\Big(X^{234}-X^{267}+X^{357}-X^{456}\Big) \\ 
&-aX^1\wedge\Big(X^{237}-X^{246}+X^{345}-X^{567}\Big)\Big\}.\nonumber
\end{align*}
\end{corollary}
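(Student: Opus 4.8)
The plan is to feed the two earlier characterizations into one another: the metric coefficients $y_1,y_2,y_3$ attached to the $\gt$-structure \eqref{eq: on G2-form}, recorded in \eqref{eq: G2 metric coefficients}, and the Sasakian constraints $y_1=4y_2^2$, $y_2=y_3$ of Theorem \ref{thm:V52 Sasakian1}. Substituting the former into the latter turns the whole statement into an algebraic computation in $(z,a,b,x,y)$; there is no further geometric input, and the content lies entirely in solving the resulting system under the open condition \eqref{eq: open G2 condition}.

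First I would impose $y_2=y_3$. From \eqref{eq: G2 metric coefficients} this reads $\frac{(b^2+ax)^{2/3}}{(a^2+by)^{1/3}}=\frac{(a^2+by)^{2/3}}{(b^2+ax)^{1/3}}$, which upon clearing denominators collapses to the clean identity $b^2+ax=a^2+by$. Pairing this with the $\gt$-structure constraint $xy=ab$ from \eqref{eq: open G2 condition} and eliminating $y=ab/x$ produces the quadratic $ax^2-(a^2-b^2)x-ab^2=0$, whose discriminant is the perfect square $(a^2+b^2)^2$; its roots are $x=a$ (giving $y=b$) and $x=-b^2/a$ (giving $y=-a^2/b$).

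The crux of the argument is discarding the second root. For $(x,y)=(-b^2/a,-a^2/b)$ one computes $a^2+by=0$, in violation of the strict inequality $a^2+by>0$ imposed in \eqref{eq: open G2 condition}; hence only $x=a$, $y=b$ is admissible. With these values $b^2+ax=a^2+by=a^2+b^2$, so \eqref{eq: G2 metric coefficients} yields $y_2=y_3=(a^2+b^2)^{1/3}$, and the last Sasakian condition $y_1=4y_2^2$ becomes $z^2/(y_2y_3)=4y_2^2$, i.e.\ $z^2=4(a^2+b^2)^{4/3}$. This is exactly \eqref{eq: G2 Sasakian metric}, and since every step is a reversible algebraic equivalence it settles both implications of the stated biconditional simultaneously.

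Finally, to exhibit the displayed family I would substitute $x=a$, $y=b$ into the coefficients of Lemma \ref{lem: SO(5) invariant G2 structures}, obtaining $p_1=p_2=a/\sqrt{a^2+b^2}$ and $p_3=p_4=b/\sqrt{a^2+b^2}$, and rewrite $1/\sqrt{a^2+b^2}$ as $(2/z)^{3/4}$ via the positive root $z=2(a^2+b^2)^{2/3}$. Inserting these into \eqref{eq: on G2-form} and \eqref{eq: 4-form psi} and regrouping the terms according to their $a$- and $b$-weights gives the stated $\varphi$ and $\psi$; coclosedness requires nothing new, being already granted by Theorem \ref{thm:Nearly}. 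The only mild bookkeeping is tracking the signs when reassembling $\psi$, but this presents no genuine obstacle.
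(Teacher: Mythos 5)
Your proposal is correct and follows essentially the same route as the paper: both substitute the metric coefficients \eqref{eq: G2 metric coefficients} into the Sasakian constraints \eqref{eq:SasakiConditionY_i}, reduce $y_2=y_3$ to $b^2+ax=a^2+by$, and combine this with $xy=ab$ to force $x=a$, $y=b$ and hence $z^2=4(a^2+b^2)^{4/3}$. The only cosmetic difference is that you solve the resulting quadratic in $x$ and discard the extraneous root $x=-b^2/a$ using the open condition $a^2+by>0$, whereas the paper reaches the same conclusion by an identity manipulation; your explicit computation of $p_1=p_2$, $p_3=p_4$ and the rewriting of $(a^2+b^2)^{-1/2}$ as $(2/z)^{3/4}$ merely spells out detail the paper leaves implicit.
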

\begin{proof} 
We compare \eqref{eq: G2 metric coefficients} and  \eqref{eq:SasakiConditionY_i}, thus 
$$
z^2=4(a^2+by)^{4/3} \qandq a^2+by=b^2+ax>0,
$$
since $a^2b^2=abxy$, we have $a^2(ax+b^2)=(b^2+ax)ax$ and $by(a^2+by)=b^2(by+a^2)$. Thus, the result follows.  
 
\end{proof}

\begin{remark}
On the $\ad(\so(3))$-invariant reductive decomposition $\fm=\fm_1\oplus\fm_2\oplus\fm_3$ \eqref{eq: reductive decomposition so(5)} the tensors given by
\begin{align*}
\xi=\frac{4}{3}e_{1}, \quad \Phi=
\begin{pmatrix}0&&\\&& I_3\\ &-I_3& \end{pmatrix} \qandq 
g=\frac38\begin{pmatrix}\frac{3}{8} &0\\ 0& I_6\end{pmatrix},
\end{align*}
define  the $\SO(5)$-invariant Sasakian-Einstein structure on $\V$, associated with nearly parallel $\gt$-structure of type II (cf.\cite{friedrich1997nearly}*{Table 2}). 
\end{remark}

\section{$\gt$ and self dual contact instantons on $\V$}\label{sec:InstantonCOnstruction}
According to Corollary \ref{cor:V52 Sasakian}, we can consider solutions of \eqref{eq:introInstEquation} for those $\gt$-structures with induced Sasakian metric. 
Additionally, if we endow a complex vector bundle $E\to\V$ with a holomorphic Sasakian structure \cite{biswas2010vector}, one obtain the notion of \emph{Chern connection}, as those connections simultaneously compatible with the holomorphic structure and some Hermitian bundle metric. The  Hermitian Yang-Mills (HYM) condition is naturally extended to a Sasakian version, namely, let $\omega\colon=d\eta\in \Omega^{1,1}(M)$ be the transversal Kähler form, we say that a connection $A\in \cA(E)$ is \emph{transverse Hermitian Yang-Mills} connection (tHYM) if
\begin{equation}\label{eq:tHYM}
\hat{F}_A\colon = \langle F_A,\omega\rangle=0, \qandq  F^{0,2}_A = 0.
\end{equation}
Here, the type $(p,q)$ of a  $k$-form on $M$  is induced by the condition $\Phi\vert_{H}^2=-1$,  where $H$ is the Kähler distribution given by $H=\ker\eta\subset TM=H\oplus \xi$. Additionally, 
the splitting on the tangent bundle induced by the contact structure yields the splitting  $\Omega^k(M)=\Omega^k(H^\ast)\oplus\eta\wedge\Omega^{k-1}(H^\ast)$. Hence, after complexification of the tangent bundle, one can naturally extend 
this bi-degree   $(p,q)$   to the bundle of $E$-valued $k$-forms.\\

We know that   $2$-forms on $M$, and consequently $\Omega^2(\fg_E)$, decomposes as (cf. \cite[Section.~2.1]{portilla2023}) 
\begin{equation}\label{eq:2formsDecomposition}
\Omega^2(M) =\underbrace{\Omega^2_1 \oplus\Omega^2_6\oplus \Omega^2_8}_{\text{Transversal $2$-forms}} \oplus \Omega^2_V  
\end{equation}
Furthermore, a connection $A\in\cA(E)$ is  SDCI  if its  curvature belong to $\Omega^2_8(\fg_E)$, or equivalently, if the curvature $F_A\in\Omega^2(\fg_E)$ is of  type $(1,1)$ orthogonal to $\omega$ (cf. \cite[Proposition~2.5]{portilla2023}).  

\begin{lemma}\label{lem:sdci g2}
With respect to the Sasakian structure of $\V$ given in Theorem \ref{thm:V52 Sasakian1} and the $\gt$-structure in  Corollary \ref{cor:V52 Sasakian} we have that: If $A$ is a SDCI, then it is a $\gt$-instanton.    
\end{lemma}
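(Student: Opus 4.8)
The plan is to translate both notions into the language of the two eigenspace decompositions of $\Omega^2(\V)$ that are available here and to show that the self-dual contact instanton condition is the stronger one. Recall from the paragraph preceding the statement that $A$ is SDCI exactly when $F_A\in\Omega^2_8(\fg_E)$, while the $\gt$-instanton equation $\ast(\varphi\wedge F_A)=-F_A$ says precisely that $F_A$ lies in the $(-1)$-eigenbundle
$$
\Omega^2_{14}(\fg_E)=\{\beta:\ast(\varphi\wedge\beta)=-\beta\},
$$
whose fibre is modelled on $\fg_2\subset\so(7)$. Since $\beta\mapsto\ast(\varphi\wedge\beta)$ acts only on the form factor, it is enough to prove the pointwise linear-algebra inclusion $\Omega^2_8\subseteq\Omega^2_{14}$ and then tensor with $\fg_E$.

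First I would describe $\Omega^2_8$ intrinsically. By definition it consists of the primitive transversal $(1,1)$-forms: those $2$-forms with no $\eta$-component, of type $(1,1)$ for $J=\Phi|_H$, and orthogonal to $\omega$. Regarding $2$-forms as skew-symmetric endomorphisms through $g$, such forms annihilate $\xi$, commute with $J$ and are $\omega$-traceless, so they constitute exactly the Lie algebra $\su(3)$ of the isotropy group of the transversal $\SU(3)$-structure $(\omega,\Re\Omega)$ acting on $H$; in particular $\dim\Omega^2_8=\dim\su(3)=8$.

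Next I would locate this $\su(3)$ inside $\fg_2$. Writing the structure of Corollary \ref{cor:V52 Sasakian} as $\varphi=-\eta\wedge\omega+c\,(a\,\Re\Omega+b\,\Im\Omega)$, one computes $\xi\lrcorner\varphi=-\omega$ and $\varphi|_{H}=c\,\Re\!\big((a-ib)\Omega\big)$. Hence the subgroup of $\gt$ fixing $\xi$ preserves $g|_H$, $\omega$, the complex structure $J$ and the rotated holomorphic volume form $(a-ib)\Omega$; this subgroup is exactly the transversal $\SU(3)$. Its Lie algebra is on the one hand $\Omega^2_8$ (previous paragraph) and on the other hand the stabiliser subalgebra of $\xi$ inside $\fg_2=\Omega^2_{14}$. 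Therefore $\Omega^2_8=\su(3)\subset\fg_2=\Omega^2_{14}$, and consequently every $F_A\in\Omega^2_8(\fg_E)$ satisfies $\ast(\varphi\wedge F_A)=-F_A$, i.e. is a $\gt$-instanton.

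The delicate point is the compatibility asserted in the third step, namely that the transversal $\SU(3)$-structure defining $\Omega^2_8$ (built from the Sasakian data $\eta,\xi,g,\Phi$) coincides with the $\SU(3)\subset\gt$ that fixes $\xi$. This relies on $\Phi|_H$ being the complex structure determined by $\omega$ and $g$, which is guaranteed precisely because we restrict to the $\gt$-structures of Corollary \ref{cor:V52 Sasakian} whose induced metric is the Sasakian one; once $\xi\lrcorner\varphi=-\omega$ is matched with the fundamental form and the horizontal part of $\varphi$ with a holomorphic volume form, the identification is forced and the equality $\dim\su(3)=8=\dim\Omega^2_8$ closes it. If one prefers to avoid the representation theory, the same inclusion can be checked by hand: expand a basis of primitive transversal $(1,1)$-forms in the orthonormal coframe \eqref{eq: on-basis_so5}, wedge with $\varphi$ from \eqref{eq: on G2-form}, and apply $\ast$; this is routine but demands care with the orientation and with the metric normalisations of Theorem \ref{thm:V52 Sasakian1}.
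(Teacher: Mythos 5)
Your proof is correct, but it takes a genuinely different route from the paper's. The paper argues by brute force with the explicit model: it writes down the eight basis elements $w_1,\dots,w_8$ of the self-dual contact forms $\Omega^2_8$ in the orthonormal coframe, verifies by direct computation that $w_i\wedge\Re\Omega_0=w_i\wedge\Im\Omega_0=0$ for all $i$, and then expands $\varphi=-\eta\wedge\omega+c\,(a\,\Re\Omega_0+b\,\Im\Omega_0)$ so that $\ast(\varphi\wedge F_A)=-\ast(\eta\wedge\omega\wedge F_A)=-F_A$ follows immediately from the SDCI equation. You instead prove the pointwise inclusion $\Omega^2_8\subseteq\Omega^2_{14}$ by identifying $\Omega^2_8$ with $\su(3)$ and $\Omega^2_{14}$ with $\fg_2$, and locating the former inside the latter as the stabiliser subalgebra of $\xi$; the one genuinely delicate point --- that the transversal $\SU(3)$ of the Sasakian data agrees with the $\SU(3)\subset\gt$ stabilising $\xi$ --- you correctly pin down via $\xi\lrcorner\varphi=-\omega$ and the restriction to the structures of Corollary \ref{cor:V52 Sasakian}, which is exactly where the hypothesis that $g_\varphi$ is the Sasakian metric enters. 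Interestingly, the paper itself records your argument as Remark \ref{rem:SDCIG2} immediately after the lemma (``Since $\SU(3)\subset\gt$ we have that $\Omega^2_8\subset\Omega^2_{14}$''), so your proof is essentially the conceptual generalisation the authors note but do not use. What the paper's computation buys is independence from any representation-theoretic bookkeeping and an explicit check of signs and normalisations; what yours buys is transparency about why the result holds (only a transversal $\SU(3)$-structure compatible with $\varphi$ is needed) and a proof that transfers verbatim to any such geometry, not just $\V$.
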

\begin{proof}
We know that in local coordinates, a  basis for self dual contact $2$--forms is given by (cf. \cite[Section~2]{portilla2023})
\begin{equation}\label{eq:sdci basis}
\begin{array}{lccl} 
w_1=X^{23}+X^{56}, &w_2=X^{26}+X^{35}, &w_3=X^{24}+X^{57}, &w_4=X^{27}+X^{45},\\[4pt]  
w_5=X^{34}+X^{67}, &w_6=X^{37}+X^{46}, &w_7=X^{25}-X^{47}, &w_8=X^{36}-X^{47}. 
\end{array}    
\end{equation}
For $\Re\Omega_0$   
 and $\Im\Omega_0$   
 given in Lemma \ref{lemma:su3-struct}, one verifies that $w_i\wedge\real\Omega_0=w_i\wedge\Im\Omega_0=0$ for each $i=1,\cdots,8$ in \eqref{eq:sdci basis}. Hence,  if $A$ is a SDCI, then its curvature $F_A$ is locally  generated by $w_i$'s in \eqref{eq:sdci basis}. Thus, for the $\gt$-structure (cf. 
Corollary \ref{cor:V52 Sasakian})
$$ 
\varphi= -\eta\wedge d\eta
+\left(\frac{2}{z}\right)^{\frac34}a\Re(\Omega_0)
+\left(\frac{2}{z}\right)^{\frac34}b\Im(\Omega_0),
$$
we have
\begin{align*}
\ast(\varphi\wedge F_A)&=
-\ast(\eta\wedge d\eta\wedge F_A)+\left(\frac2z\right)^{\frac34}a\ast(\Re(\Omega_0)\wedge F_A)+\left(\frac2z\right)^{\frac34}b\ast(\Im(\Omega_0)\wedge F_A)\\
&=-\ast(\eta\wedge d\eta\wedge F_A)=-F_A.
\end{align*}
\end{proof}

\begin{remark}\label{rem:SDCIG2}
We can generalise Lemma \ref{lem:sdci g2} in the following sense. Observe that from the decomposition in \eqref{eq:2formsDecomposition},  
we have the $\SU(3)$-invariant decomposition
\begin{equation}\label{eq:omega21}
\Omega^2(H)=\Omega^2_6\oplus\Omega^2_1\oplus\Omega_8^2,    
\end{equation}

where $\Omega^2_8\simeq\su(3)$ and $\Omega^2_1=\Omega^0(M)\otimes\omega$. On the other hand, we know that the space of $2$-forms on $H^\C=H\otimes_\R \C$ decomposes into the $\Gl(3,\C)$-invariant submodules 
\begin{equation*}
\Omega^2(H^\C)=\Omega^{2,0}(H^\C)\oplus\Omega^{0,2}(H^\C)\oplus \Omega^{1,1}(H^\C).
\end{equation*}
By Schur's Lemma, we have  (see \cite{falcitelli1994}*{Equation (1.2)}
\begin{equation}
\Omega^{2,0}\oplus\Omega^{0,2}\simeq \Omega^2_6\otimes_\R\C\qandq \Omega^{1,1}\simeq (\Omega^2_1\oplus\Omega^2_8)\otimes_\R\C.
\end{equation}
On the other hand, we have the $\gt$-invariant decomposition
$  
\Omega^2(TM)=\Omega^2_7\oplus \Omega^2_{14},
$ 
where $\Omega^2_{14}\simeq\fg_2$. Since $\SU(3)\subset \gt$ we have that $\Omega_8^2\subset\Omega_{14}^2$ and $\Omega^2_1\subset \Omega^2_7$.
It is easy to see that $\Omega^2_V$ has elements in both $\Omega_7^2$ and $\Omega_{14}^2$. Hence, the same is true for   $\Omega^2_6$.  
\end{remark}
\begin{lemma}\label{lem:g2 sdci}
Let $E\to \V$ be a Sasakian holomorphic vector bundle \cite[Section~3]{Boyer2008},  the Sasakian structure $(\eta,\xi,g,\Phi)$ is given in Theorem  \ref{thm:V52 Sasakian1}  and $A\in\cA(E)$ the Chern connection. If  $A\in \cA(E)$ is a $\gt$-instanton, then $A$ satisfies the equation \eqref{eq:introInstEquation}, i.e. it is a  $\lambda$-contact instanton . 
\end{lemma}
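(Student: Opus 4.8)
The plan is to reverse the logic of Lemma \ref{lem:sdci g2} using the decomposition of $2$-forms recalled in Remark \ref{rem:SDCIG2}. The Chern connection $A$ of a Sasakian holomorphic bundle is by definition compatible with the transverse holomorphic structure, so its curvature $F_A$ has no $(0,2)$ or $(2,0)$ component along the Kähler distribution $H$; that is, $F_A^{0,2}=0$ and $F_A^{2,0}=0$. Translating this into the real decomposition \eqref{eq:2formsDecomposition}--\eqref{eq:omega21}, the vanishing of the $(2,0)+(0,2)$ part means $F_A$ has no component in $\Omega^2_6$, so its transversal part lies in $\Omega^2_1\oplus\Omega^2_8$. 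First I would record this observation precisely: for the Chern connection, $F_A\in(\Omega^2_1\oplus\Omega^2_8)\oplus\Omega^2_V$.

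Next I would impose the $\gt$-instanton condition $\ast(\varphi\wedge F_A)=-F_A$, i.e. $F_A\in\Omega^2_{14}\simeq\fg_2$. Using $\Omega^2_8\subset\Omega^2_{14}$ and $\Omega^2_1\subset\Omega^2_7$ from Remark \ref{rem:SDCIG2}, the $\Omega^2_1$ component of $F_A$ must vanish, since $\Omega^2_7\cap\Omega^2_{14}=0$. This forces the transversal part of $F_A$ to live purely in $\Omega^2_8$. The remaining issue is the vertical part $\Omega^2_V$: I would argue that for the Chern connection the curvature is transversal (it annihilates the Reeb direction $\xi$), so $\iota_\xi F_A=0$ and the $\Omega^2_V$ component drops out. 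Concretely, the condition $\hat F_A=\langle F_A,\omega\rangle=0$ that accompanies the Chern/holomorphic data is exactly the vanishing of the $\Omega^2_1$-component, and transversality removes $\Omega^2_V$.

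With $F_A\in\Omega^2_8$ established, the connection is SDCI by the characterisation recalled just before Lemma \ref{lem:sdci g2} (curvature of type $(1,1)$ orthogonal to $\omega$, equivalently $F_A\in\Omega^2_8(\fg_E)$). Since SDCI is precisely the solution of \eqref{eq:introInstEquation} with $\lambda=1$ and $\sigma=\eta\wedge\omega$, this shows $A$ is a $\lambda$-contact instanton, completing the proof. The cleanest way to present this is to verify directly, on the basis $w_1,\dots,w_8$ of \eqref{eq:sdci basis} spanning $\Omega^2_8$, that $\ast(\eta\wedge\omega\wedge w_i)=w_i$, mirroring the computation in Lemma \ref{lem:sdci g2}.

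I expect the main obstacle to be the bookkeeping at the interface of the two decompositions: one must be careful that the representation-theoretic inclusions $\Omega^2_8\subset\Omega^2_{14}$ and $\Omega^2_1\subset\Omega^2_7$ are being used correctly, and that the $\Omega^2_V$ (vertical) component really is eliminated by the Chern/transversality hypothesis rather than silently assumed. In particular, the subtle point flagged in Remark \ref{rem:SDCIG2}, that $\Omega^2_6$ and $\Omega^2_V$ each meet both $\Omega^2_7$ and $\Omega^2_{14}$, means the argument cannot simply project onto $\Omega^2_{14}$ and read off SDCI; it is essential to first use holomorphicity to kill $\Omega^2_6$ and transversality to kill $\Omega^2_V$, and only then invoke the $\gt$-instanton condition to kill $\Omega^2_1$.
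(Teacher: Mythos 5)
Your proof is correct, but it takes a genuinely different (and in fact stronger) route than the paper's. The paper's proof is a direct wedge computation: since the Chern connection of a Sasakian holomorphic bundle has curvature of type $(1,1)$, bidegree reasons alone force $F_A\wedge\Re\Omega_0=F_A\wedge\Im\Omega_0=0$ (a $(1,1)$-form wedged with a $(3,0)$- or $(0,3)$-form on a rank-$3$ transversal complex distribution vanishes), hence $\varphi\wedge F_A=-\eta\wedge d\eta\wedge F_A$ and the $\gt$-instanton equation literally becomes the contact instanton equation $\ast(\eta\wedge\omega\wedge F_A)=\lambda F_A$; the decomposition $\Omega^2_7\oplus\Omega^2_{14}$ is never invoked and the $\Omega^2_1$-component of $F_A$ is never shown to vanish, which is why the statement only claims a $\lambda$-contact instanton for an unspecified $\lambda$. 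Your argument instead places $F_A$ in $\Omega^2_1\oplus\Omega^2_8$ using holomorphicity and transversality, and then plays the inclusions $\Omega^2_1\subset\Omega^2_7$, $\Omega^2_8\subset\Omega^2_{14}$ of Remark \ref{rem:SDCIG2} against the instanton hypothesis $F_A\in\Omega^2_{14}$ to kill the $\Omega^2_1$-part. This buys you a stronger conclusion: $\hat F_A=0$ as well, so the Chern connection is actually SDCI (indeed tHYM), not merely an eigenvector of $\ast(\eta\wedge\omega\wedge\cdot)$ for some eigenvalue. The cost is that you must justify the two inclusions and the basic/transversal character of the Chern curvature; note that the latter is also implicitly used by the paper, since the bigrading is only defined on transversal forms, so this is not an extra hypothesis. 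One small caution: your phrasing that $\hat F_A=0$ ``accompanies the Chern/holomorphic data'' should not be read as saying the $\omega$-trace vanishes automatically for a Chern connection --- in your argument it is a consequence of the $\gt$-instanton hypothesis, and your proof correctly derives it that way.
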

\begin{proof}
Consider  $\varphi$ given in  Corollary \ref{cor:V52 Sasakian} and set  $dz^k=X^k+\sqrt{-1}X^{k+3}$ for $k=2,3,4$. From  Lemma \ref{lemma:su3-struct} and \eqref{eq: su3-structure}, we know that  $\Omega_0=dz^2\wedge dz^3\wedge dz^4\in \Lambda^{3,0}H^\ast_\C$, where $H:=\ker(\eta)$. Now, if $A\in\cA(E)$ is the Chern connection, then  $F_A$ is of $(1,1)$-type \cite[p.552]{biswas2010vector}. By taking into consideration the bi-degree of $\Omega_0$, we conclude that $F_A\wedge\Omega_0=0$ and  $F_A\wedge\overline{\Omega_0}=0$, hence both $F_A\wedge \Im{\Omega_0} $ and $F_A\wedge \Re{\Omega_0}$ vanishes, then 
\begin{align*}
F_A\wedge\varphi
&=F_A\wedge\left(\eta\wedge d\eta
+\left(\frac{2}{z}\right)^{\frac34}a\Re\Omega_0
+\left(\frac{2}{z}\right)^{\frac34}b\Im\Omega_0\right)\\
&=F_A\wedge\eta\wedge d\eta. 
\end{align*} 
Consequently, solutions to the $\gt$-instanton equation are also solutions to the contact instanton equation.  
\end{proof} 
Consider the  Kähler manifold  $G_{2}(\R^5)\cong\V/S^1$, we fix a holomorphic bundle $(\cE,\bar{\partial})$  on $ G_{5,2}$,   the pullback $\cE^\ast:=\pi^\ast \cE \to \V $ carries a structure of  Sasakian holomorphic bundle 
 \[
\begin{diagram}
\node{\cE^\ast} \arrow{s,l}{}
\node{\cE} \arrow{s,r}{} \\
\node{\V} \arrow{e,b}{\pi} \node{G_{2}(\R^5)}
\end{diagram}
\]
where  $\pi \colon \V \to G_{2}(\R^5)$ is  a Sasakian circle fibration endowed with the $\gt$-structure \eqref{eq: on G2-form}. Finally, in \cite[Lemma~2.4]{portilla2023} was proved that SDCI are precisely  connections whose curvature is of type $(1,1)$ and orthogonal to $\omega$, %
(cf. the splitting in \eqref{eq:2formsDecomposition} and Remark \ref{rem:SDCIG2}). Thus, from Lemmas \ref{lem:sdci g2} and  \ref{lem:g2 sdci}, we obtain the following result:
\begin{corollary}\label{cor:tHYM}
 The vector bundle  $\cE^\ast$ is a holomorphic Sasakian bundle, and a Chern connection $A$ on $\cE$ is HYM if, and only if, 
$\pi^\ast A$ is a $\gt$-instanton on $\cE^*$.
\end{corollary}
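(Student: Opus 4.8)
The plan is to combine the two preceding lemmas with the standard correspondence between connections on a circle bundle and their pullbacks, exploiting that the $\gt$-structure on $\V$ decomposes as $\varphi=-\eta\wedge d\eta+(\tfrac{2}{z})^{3/4}(a\Re\Omega_0+b\Im\Omega_0)$ from Corollary \ref{cor:V52 Sasakian}, where the transverse part is governed by the $\SU(3)$-structure $(\omega,\Re\Omega)$ pulled up from $G_2(\R^5)$. First I would record that $\cE^\ast=\pi^\ast\cE$ inherits a Sasakian holomorphic structure: the transverse holomorphic structure of $\V\to G_2(\R^5)$ identifies the Kähler distribution $H=\ker\eta$ with $\pi^\ast T^{1,0}G_2(\R^5)$, so the $\bar\partial$-operator of $(\cE,\bar\partial)$ pulls back to a transverse holomorphic structure on $\cE^\ast$, and a Hermitian metric on $\cE$ pulls back to one on $\cE^\ast$; this is exactly the setup of \cite{biswas2010vector}. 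The Chern connection of $\cE^\ast$ is then the pullback $\pi^\ast A$ of the Chern connection $A$ of $\cE$, with curvature $F_{\pi^\ast A}=\pi^\ast F_A$, a basic horizontal $(1,1)$-form.

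Next I would prove the two directions using Lemmata \ref{lem:sdci g2} and \ref{lem:g2 sdci}. For the forward direction, suppose $A$ is HYM on $\cE$, so $F_A$ is of type $(1,1)$ and $\langle F_A,\omega\rangle=0$ on $G_2(\R^5)$. Pulling back, $F_{\pi^\ast A}=\pi^\ast F_A$ is a transverse $(1,1)$-form orthogonal to the transverse Kähler form $\omega=d\eta$, with no vertical ($\Omega^2_V$) component since it is basic; by \cite[Lemma~2.4]{portilla2023} this is precisely the SDCI condition. Lemma \ref{lem:sdci g2} then yields that $\pi^\ast A$ is a $\gt$-instanton. For the converse, assume $\pi^\ast A$ is a $\gt$-instanton. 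Because $A$ is a Chern connection on $\cE$, $\pi^\ast A$ is a Chern connection on the Sasakian holomorphic bundle $\cE^\ast$, so Lemma \ref{lem:g2 sdci} applies and shows $\pi^\ast A$ satisfies the contact instanton equation, i.e.\ is a SDCI; equivalently $F_{\pi^\ast A}$ is transverse $(1,1)$ and orthogonal to $\omega$. Since $F_{\pi^\ast A}=\pi^\ast F_A$ and $\pi^\ast$ is injective on forms on the base, the same holds for $F_A$ downstairs, which is exactly the HYM condition on $\cE$.

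The main obstacle I expect is the bookkeeping that makes the biconditional genuinely an \emph{iff} rather than two one-directional implications: one must check that the intermediate instanton notions (SDCI, $\gt$-instanton, tHYM/HYM) are pairwise equivalent \emph{for Chern connections} on these basic bundles, and that the vertical component $\Omega^2_V$ of the curvature vanishes so that orthogonality to $\omega$ transverse downstairs is equivalent to orthogonality upstairs. Concretely, Lemma \ref{lem:sdci g2} gives SDCI $\Rightarrow\gt$-instanton in general, while Lemma \ref{lem:g2 sdci} gives $\gt$-instanton $\Rightarrow$ contact instanton only for Chern connections; closing the loop requires the observation from Remark \ref{rem:SDCIG2} that $\Omega^2_6$ meets both $\Omega^2_7$ and $\Omega^2_{14}$, so it is the Chern ($(1,1)$-type) hypothesis that forces the $\Omega^2_6$ and $\Omega^2_V$ parts to drop out and pins the curvature into $\Omega^2_8$. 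Once vanishing of these components is verified for $\pi^\ast A$, the equivalence of all three conditions for Chern connections on $\cE^\ast$ follows, and the statement drops out by transporting the curvature condition across the isomorphism $F_{\pi^\ast A}=\pi^\ast F_A$.
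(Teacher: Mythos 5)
Your proposal follows essentially the same route as the paper: the paper's proof consists precisely of noting that $\cE^\ast=\pi^\ast\cE$ inherits a Sasakian holomorphic structure from the circle fibration, invoking the characterisation of SDCI as connections with transverse $(1,1)$ curvature orthogonal to $\omega$ (so that HYM downstairs corresponds to SDCI upstairs for the basic pullback curvature), and then closing the equivalence with Lemmata \ref{lem:sdci g2} and \ref{lem:g2 sdci}. Your version supplies more bookkeeping (injectivity of $\pi^\ast$, vanishing of the $\Omega^2_V$ component, the role of the $(1,1)$ hypothesis in pinning the curvature into $\Omega^2_8$) than the paper records, but the argument is the same.
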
 
We recall that in Lemma \ref{lem:sdci g2},  Lemma \ref{lem:g2 sdci} and Corollary \ref{cor:tHYM} the key argument  follow just in presence of a SU$(n)$-structure.  These have analogous results  in \cite[Proposition~3.11]{portilla2023} and  \cite[Lemma~21]{calvo2020gauge}, the difference in our  case is  that we do not assume $M$ is a \emph{contact Calabi-Yau}  but only endowed with a transversal $\SU(3)$-structure. 
\section{Homogeneous Principal Bundles on $\V$}\label{sec:homogeneousBundles}
First, we recall some notation and results about homogeneous principal bundles, following \cite{cap2009}*{Section 1.4}. Let $H\subset G$ be  a closed subgroup  of the Lie group $G$, we denote by  $M=G/H$ the homogeneous space, let $L\colon G\times M\to M$ be the canonical left action and $p\colon G\to M$ an $H$--principal bundle  given by the canonical projection. 
\begin{definition}
A \textit{homogeneous principal bundle } over $M$ is a locally trivial principal bundle $\pi\colon P\to M$ together with a left $G$--action $\wt{L} \colon G\times P\to P$, which lifts the action $L$ on $M$, i.e., $\pi(\wt{L}(\ig,y))=L(\ig,\pi(y))$, and for each $\ig\in G$, the bundle map $\wt{L}_\ig\colon P\to P $ is a homomorphism of  principal bundles.    
\end{definition} 
Homogeneous principal bundles are characterised in the following way:  
\begin{lemma}\cite[Lemma~1.4.5]{cap2009}  
\label{lem:linearMapCondition}
Let $G$ and  $K$ be Lie groups and $H\subset G$ a closed Lie subgroup. Let $P\to G/H$ be  a homogeneous   $K$-principal bundle, then   there exists a  smooth  homomorphism  $\phi\colon H\to K$ such that $P\cong G\times_\phi K$. The action of $H$ on $K$ is given by $h\cdot k=\phi(h)\cdot k$ for each $h\in H$ and $k\in K$. The bundles  corresponding  to two homomorphisms $\phi, \wt{\phi}\colon H\to K$ are isomorphic if and only if  
there exists $k\in K$ such that  $k\cdot\phi=\wt\phi\cdot k$     
\end{lemma}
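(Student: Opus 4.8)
The plan is to reconstruct the classifying homomorphism $\phi$ from the way the isotropy group $H$ acts on the fiber over the base point, then to build an explicit equivariant diffeomorphism onto the associated bundle $G\times_\phi K$; the converse clause will follow from a direct comparison of the two distinguished points. Throughout I understand ``isomorphic'' in the sense of homogeneous principal bundles, i.e. $G$- and $K$-equivariant bundle isomorphisms covering $\id_M$.

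First I would fix $o=eH\in M$ and a point $p_0$ in the fiber $P_o$. Since every $h\in H$ fixes $o$, the map $\wt L_h$ preserves $P_o$ and, being a principal bundle homomorphism, commutes with the free transitive right $K$-action on the fiber; hence there is a unique $\phi(h)\in K$ with $\wt L_h(p_0)=p_0\cdot\phi(h)$. The homomorphism property is immediate from $\wt L_{h_1h_2}(p_0)=\wt L_{h_1}(p_0\cdot\phi(h_2))=\bigl(\wt L_{h_1}(p_0)\bigr)\cdot\phi(h_2)=p_0\cdot\phi(h_1)\phi(h_2)$, so $\phi(h_1h_2)=\phi(h_1)\phi(h_2)$. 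For smoothness I would use that the orbit map $k\mapsto p_0\cdot k$ is a diffeomorphism $K\to P_o$, so that $\phi$ is the composition of the smooth map $h\mapsto\wt L_h(p_0)$ with the inverse of this diffeomorphism; this is the only step that uses genuine differential-topological input rather than formal bookkeeping.

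Next I would define $\Psi\colon G\times K\to P$ by $\Psi(g,k)=\wt L_g(p_0)\cdot k$ and check that it descends to $G\times_\phi K=(G\times K)/\!\sim$, where $(g,k)\sim(gh^{-1},\phi(h)k)$ for $h\in H$. This follows from $\Psi(gh^{-1},\phi(h)k)=\wt L_g\bigl(\wt L_{h^{-1}}(p_0)\bigr)\cdot\phi(h)k=\wt L_g\bigl(p_0\cdot\phi(h)^{-1}\bigr)\cdot\phi(h)k=\wt L_g(p_0)\cdot k=\Psi(g,k)$, using $\phi(h^{-1})=\phi(h)^{-1}$ and that $\wt L_g$ commutes with the right $K$-action. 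The induced map $\bar\Psi\colon G\times_\phi K\to P$ covers $\id_M$, is $K$-equivariant by construction, and is $G$-equivariant because $\Psi(g'g,k)=\wt L_{g'}(\Psi(g,k))$. Since a fiber-preserving, $K$-equivariant morphism of principal $K$-bundles over the same base covering the identity is automatically an isomorphism, this yields $P\cong G\times_\phi K$.

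For the uniqueness clause I would take a $G$- and $K$-equivariant isomorphism $F\colon G\times_\phi K\to G\times_{\wt\phi}K$ and evaluate it at the canonical point $[e,e]$, writing $F([e,e])=[e,k]$. Since $h\cdot[e,e]=[e,e]\cdot\phi(h)$ in the domain and $h\cdot[e,e]=[e,e]\cdot\wt\phi(h)$ in the target, equivariance of $F$ gives $[e,k\phi(h)]=F([e,e]\cdot\phi(h))=F(h\cdot[e,e])=h\cdot[e,k]=[e,\wt\phi(h)k]$; as $k'\mapsto[e,k']$ is injective on the fiber, this forces $k\phi(h)=\wt\phi(h)k$ for all $h$, i.e. $k\cdot\phi=\wt\phi\cdot k$. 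Conversely, given such a $k$, the assignment $[g,k']\mapsto[g,kk']$ is well defined precisely because $\wt\phi(h)=k\phi(h)k^{-1}$, and it is manifestly $G$- and $K$-equivariant, hence an isomorphism. The only real obstacle throughout is the smoothness of $\phi$; everything else is equivariance bookkeeping on the associated bundle.
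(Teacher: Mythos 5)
Your proof is correct and complete: the construction of $\phi$ from the $H$-action on the fiber over $eH$, the descent of $\Psi(g,k)=\wt L_g(p_0)\cdot k$ to the associated bundle, and the comparison at the distinguished point $[e,e]$ for the isomorphism clause are all carried out accurately. Note that the paper offers no proof of this statement --- it is quoted directly from \cite{cap2009}*{Lemma 1.4.5} --- and your argument is precisely the standard one given in that reference, so there is nothing to contrast.
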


For a homogeneous $K$-principal bundle $P\to G/H$, a connection $A\in \Omega^1(P,\fk)$ is called \emph{invariant} if $\wt{L}_\ig^*A=A$ for each $\ig\in G$.  

\begin{lemma}\label{rem:linearMapCondition}\cite[Theorem 1.4.5]{cap2009} Let $P=G\times_\phi K\to G/H$ be the homogeneous $K$-principal bundle induced by the homomorphism $\phi: H\to K$. Then, invariant principal connections on $P$ are in bijective correspondence with linear maps $\alpha\colon \fg\to \fk$ such that 
\begin{itemize} 
\item[(i)]
$\alpha\vert_{\fh}=\phi_\ast\colon \fh\to \fk$, the derivative of $\phi$.    
\item[(ii)] 
$\alpha\circ \Ad(h)= \Ad(\phi(h))\circ \alpha$ for all $h\in H$.
\end{itemize} 
\end{lemma}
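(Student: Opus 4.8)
The plan is to realise the bijection concretely through the canonical map $\iota\colon G\to P$, $\iota(\ig)=[\ig,e]$, and to identify invariant connection $1$-forms with their pullbacks to $G$. Writing $P=G\times_\phi K=(G\times K)/H$ with the identification $[\ig h,k]=[\ig,\phi(h)k]$, the right $K$-action is $[\ig,k]\cdot k'=[\ig,kk']$ and the lifted action is $\wt L_{\ig'}[\ig,k]=[\ig'\ig,k]$. One checks directly that $\iota$ is $G$-equivariant, $\iota\circ L_{\ig'}=\wt L_{\ig'}\circ\iota$, and $\phi$-equivariant for the right actions, $\iota\circ R_h=R_{\phi(h)}\circ\iota$ for $h\in H$. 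Given an invariant connection $A\in\Omega^1(P,\fk)$, I would set $\beta:=\iota^*A\in\Omega^1(G,\fk)$; invariance of $A$ together with $G$-equivariance of $\iota$ shows that $\beta$ is left-invariant, hence determined by $\alpha:=\beta_e\colon\fg\to\fk$. This defines the assignment $A\mapsto\alpha$.

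To verify (i), I would note that for $X\in\fh$ the curve $t\mapsto\iota(\exp tX)=[e,e]\cdot\exp(t\,\phi_\ast X)$ is the integral curve of the fundamental vector field of $\phi_\ast X$; since any principal connection reproduces fundamental vector fields, $\alpha(X)=\phi_\ast X$, i.e. $\alpha|_{\fh}=\phi_\ast$. For (ii), the $K$-equivariance $R_k^*A=\Ad(k^{-1})\circ A$ combined with $\iota\circ R_h=R_{\phi(h)}\circ\iota$ gives $R_h^*\beta=\Ad(\phi(h)^{-1})\,\beta$; evaluating these two left-invariant forms at $e$ yields $\alpha\circ\Ad(h^{-1})=\Ad(\phi(h)^{-1})\circ\alpha$, which is condition (ii).

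For the converse, given $\alpha$ satisfying (i) and (ii), I would let $\beta$ be its left-invariant extension ($\beta_e=\alpha$) and define on $G\times K$ the $\fk$-valued $1$-form
\[
\wt A=\Ad(k^{-1})\,\pr_G^*\beta+\pr_K^*\theta_K,
\]
where $\theta_K$ is the (left) Maurer–Cartan form of $K$ and $k$ is the $K$-coordinate. The core of the argument is to show that $\wt A$ is basic for the $H$-action $h\cdot(\ig,k)=(\ig h,\phi(h)^{-1}k)$ whose orbits are the fibres of $q\colon G\times K\to P$. Invariance uses (ii) in the form $R_h^*\beta=\Ad(\phi(h)^{-1})\beta$, so that the factor $\Ad(\phi(h))$ arising from the shift $k\mapsto\phi(h)^{-1}k$ in the first summand is exactly cancelled, while the second summand is invariant because $\theta_K$ is left-invariant. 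Horizontality along the $H$-orbits uses (i): the fundamental field of $X\in\fh$ contributes $\Ad(k^{-1})\alpha(X)$ through the first summand and $-\Ad(k^{-1})\phi_\ast X$ through $\theta_K$, and these cancel precisely because $\alpha(X)=\phi_\ast X$. Hence $\wt A$ descends to a form $A\in\Omega^1(P,\fk)$. That $A$ is a genuine connection — $K$-equivariant and reproducing fundamental vector fields — and that it is $\wt L$-invariant follows formally from the left-invariance of $\beta$ and $\theta_K$ and from the $\Ad(k^{-1})$ prefactor, which supplies the required equivariance $R_{k'}^*\wt A=\Ad(k'^{-1})\wt A$.

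Finally, to see that the two assignments are mutually inverse I would use $\iota=q\circ s$ with $s(\ig)=(\ig,e)$: then $\iota^*A=s^*\wt A=\beta$, because $\Ad(e^{-1})=\id$ and $\pr_K\circ s$ is constant, so the construction above is a section of $A\mapsto\alpha$; uniqueness of the descended form gives the reverse composite. I expect the descent step — checking that $\wt A$ is simultaneously $H$-invariant and $H$-horizontal, where both hypotheses (i) and (ii) enter — to be the main obstacle, the forward direction being essentially formal.
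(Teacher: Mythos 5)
Your proof is correct. The paper itself gives no argument for this lemma --- it is quoted verbatim from \cite[Theorem~1.4.5]{cap2009} --- and your construction (pulling an invariant connection back along $\iota\colon G\to P=(G\times K)/H$ to a left-invariant $\fk$-valued form, and conversely descending $\Ad(k^{-1})\,\pr_G^*\beta+\pr_K^*\theta_K$ from $G\times K$, with (i) giving horizontality and (ii) giving invariance along the $H$-orbits) is precisely the standard proof of that cited theorem.
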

\subsection{Homogeneous $\SO(3)$-principal bundles}\label{sec:SO(3)Bundles}
Consider $H=K=\SO(3)$ and $G=\SO(5)$. Since $\SO(3)$ is simple, up to conjugation there are   two smooth homomorphisms  $\phi_i\colon \SO(3)\to \SO(3),$ $i=0,1$. Namely, $\phi_0$  the trivial homomorphism and $\phi_1=1_{\SO(3)}$ the identity homomorphism, according  to Lemma \ref{lem:linearMapCondition}, there exist precisely two\footnote{Up to isomorphism of homogeneous principal bundles} homogeneous $\SO(3)$-principal bundles $\pi_i\colon P_i=G\times_{ \phi_i}\SO(3)\to\V$,  $i=0,1$. 
\begin{lemma}\label{lem:ASDCIonP0}
There is a unique invariant $\SO(3)$--connection on $P_0=\V\times \SO(3)$, given by
\begin{equation}\label{eq:trivialConnection}
\alpha=e^{1}\otimes(b_8e_8+b_9e_9+b_{10}e_{10}).   \end{equation}
Furthermore,  the curvature $F_\alpha$ of the connection $\alpha$ belongs to $\Omega^2_1(\so(3))$ cf. \eqref{eq:omega21} and Remark \ref{rem:SDCIG2}.  
\end{lemma}
\begin{proof}
Consider an arbitrary linear map $\alpha: \so(5)\rightarrow \so(3)$, in the  basis \eqref{eq: so(5)_basis} we write   
$$
\alpha=a_ie^i\otimes e_8+b_je^j\otimes e_9+c_ke^k\otimes e_{10}.
$$   
From Lemma \ref{rem:linearMapCondition},  $\alpha$ is an invariant connection, if and only if, it satisfies 
$\alpha\vert_{\so(3)}=0$ and  $\alpha\circ \Ad(\ig)=\alpha$, for all $\ig\in \SO(3)$. The  condition $\alpha\vert_{\so(3)}=0$  implies  that $\alpha$ is of  the form   $\alpha=a_ie^i\otimes e_8+b_je^j\otimes e_9+c_ke^k\otimes e_{10}$ for $i,j,k\in \{1,\dots, 7\}$. Now, we apply the second condition to 
$
\ig_k=
\begin{pmatrix} 
I_{2\times 2}&\\
0& h_k\\
\end{pmatrix}\in \SO(5),$ we define some values of $h_k$, 
\begin{equation}\label{eq:h}
h_0=\begin{pmatrix} 0& 1& 0\\1& 0& 0\\
0& 0&-1 \end{pmatrix}, 
h_1=\begin{pmatrix} 0& 1& 0\\-1& 0& 0\\0& 0&1 \end{pmatrix}, 
h_2=\begin{pmatrix} 0& 0& 1\\0&-1& 0\\1& 0& 0\end{pmatrix} \qandq
h_3=\begin{pmatrix} 0 & 0& 1\\0 & 1& 0\\-1& 0& 0\end{pmatrix}
\end{equation}
First, considering        
$h_0$ and $h_1$, we obtain that $\alpha(e_i)=a_ie_8+b_ie_9+c_ie_{10}$ and,  for $i\neq 3$ and $i\neq 6$  
\begin{align*}
\alpha(\Ad(\ig_0)(e_i))=\alpha(\ig_0e_i\ig_0^{-1})&=a_ie_8+b_ie_{10}+c_ie_9  \\
\alpha(\Ad(\ig_1)(e_i))=\alpha(\ig_1e_i\ig_1^{-1})&=-a_ie_8-b_ie_{10}-c_ie_9 
\end{align*}
it follows that $a_i=b_i=c_i=0$ for $i\in\{2,4,5,7\}$. 
Now, by considering 
$h_2$ and $h_3,
$
we obtain for $i=3,6$ 
$$\begin{array}{cl}
\alpha(\Ad(\ig_0)(e_i))=&-\alpha(e_i)=a_ie_8+b_ie_{10}+c_ie_9  \\ [5pt]
\alpha(\Ad(\ig_1)(e_i))=&\alpha(e_i)=-a_ie_8-b_ie_{10}-c_ie_9. 
\end{array}$$ 
Consequently, an invariant connection $\alpha$ is of the form
\begin{equation} 
\alpha=e^{1}\otimes(b_8e_8+b_9e_9+b_{10}e_{10}).   \end{equation} 
The curvature of $\alpha$  is  $F_\alpha=y_2^{-1}(X^{25}+X^{36}+X^{47})\otimes(b_8e_8+b_9e_9+b_{10}e_{10})$. The second statement  in the Lemma follows from 
\eqref{eq: su3-structure} since 
\begin{align}\label{eq: SO(3)_curvature_trivial}
F_\alpha
=2\omega\otimes(b_8e_8+b_9e_9+b_{10}e_{10}).
\end{align}
\end{proof} 
\begin{lemma}[Homogeneous $\U(1)$-connections]\label{rem:U(1)} 
There is a unique invariant $\U(1)$-connections on $P=\V\times \U(1)$, given by
\begin{align}\label{eq:U1connection}
    \beta=a_1e^1\otimes e_1
\end{align}
Furthermore,  the curvature $F_\alpha$ of the connection $\alpha$ belongs to $\Omega^2_1$ cf. \eqref{eq:omega21}.  
\end{lemma}
\begin{proof}
Similar to Lemma \ref{lem:ASDCIonP0}, since $\SO(3)$ is simple, all non-trivial homomorphism $\phi\colon \SO(3)\rightarrow K$ is either trivial or injective for all Lie group $K$. In particular for  $K=\U(1)$, from Lemma \ref{lem:linearMapCondition},  there exists a unique\footnote{Up to isomorphism of principal bundles} homogeneous $\U(1)$-bundle $P$ corresponding to $\phi$, namely, the trivial bundle  $P=\V\times \U(1)$.    
Any invariant connection on $P$ is given by a linear map $\beta: \so(5)\rightarrow \fu(1)$ satisfying  
$$
(i) \quad \beta\vert_{\mathfrak{so}(3)}=0=\phi_\ast\colon \so(3)\to \R \qandq (ii) \quad \beta\circ \Ad(a)= \beta, \text{ for all } a\in \SO(3).
$$
By considering $e_1$ the generator of  the Abelian Lie algebra $\fu(1)=\R$, the unique invariant connection is given by 
$$
\alpha=a_ie^i\otimes e_1 \qforq  i=1,\dots, 10.
$$
From item $(i)$,  since $\alpha\vert_{\so(3)}=0$, $\alpha$ has to be of the form  $\alpha=a_ie^i\otimes e_1$,  $i=1,\cdots, 7.$
On the other hand,  considering item $(ii)$ for $\ig_0,\ig_1$  given in \eqref{eq:h}, we use the same argument used in the proof of Lemma \ref{lem:ASDCIonP0} to obtain $\alpha(e_i) =a_ie_1$ and  for $i=2,\cdots,7$
\begin{align*}
\alpha(\Ad(\ig_0)(e_i))=\alpha(\ig_0e_i\ig_0^{-1}))&=-a_ie_1  \\
\alpha(\Ad(\ig_1)(e_i))=\alpha(\ig_1e_i\ig_1^{-1}))&=a_ie_1. 
\end{align*}
 Therefore, $\alpha$ is characterised by  
\begin{align}\label{eq:u1connection}
    \beta=a_1e^1\otimes e_1
\end{align}
and its curvature $2$-form is
\begin{equation}\label{eq: U(1)_curvature}
F_\beta=2(X^{25}+X^{36}+X^{47})\otimes e_1.
\end{equation}
Note that $F_\beta\in \Omega^2_1=\omega\otimes\Omega^0(M)$ \eqref{eq:split2forms}.
\end{proof}

Now, we consider the $\SO(3)$-principal bundle $P_1=\SO(5)\times_{\phi_1}\SO(3)$  induced by the homomorphism identity $\phi_1\colon \SO(3)\to \SO(3)$. The projection of $P_1$ is  given by 
\begin{equation}\label{eq:pi_1homogeneous_bundle}
 \pi_1\colon  [(u,s)]\in P_1\mapsto u\cdot\SO(3)\in \V, 
\end{equation}
and the class is given by  $[(u,s)]=\{(u\cdot r, r^{-1}s)\; \vert \;r\in \SO(3)\}.$ 
There exist a left $\SO(5)$-action on $P_1$ such that for each $v\in \SO(5)$ the bundle 
map $L_v\colon P_1\to P_1$  is $\pi_1$-equivariant, i.e., 
$$
  \pi_1(L_v[(u,s)])=\pi_1([(vu,s)])=vu\cdot\SO(3)=L_v(\pi_1[(u,s)]).
$$
According  to \eqref{eq: so(5)_basis}, we fix the basis
$\{e_8,e_9,e_{10}\}$ for $\so(3)\subset \so(5)$ such that $[e_i,e_j]=\varepsilon_{ijk}e_k$, where $\varepsilon_{ijk}$ is the sign of the permutation $(i\;j\;k)$. 
\begin{lemma}\label{lem:alphaInv}
There is a unique invariant $\SO(3)$--connection on $P_1=\V\times_{\phi_1} \SO(3)$ given by
\begin{equation}\label{eq: invariant_connection}
\alpha=e^8\otimes e_8+e^9\otimes e_9+e^{10}\otimes e_{10}.
\end{equation} 
\end{lemma}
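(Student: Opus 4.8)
The plan is to specialise the general description of invariant connections from Lemma \ref{rem:linearMapCondition} to $G=\SO(5)$, $H=K=\SO(3)$ and the homomorphism $\phi=\phi_1=\id_{\SO(3)}$. Under that correspondence an invariant connection is the same datum as a linear map $\alpha\colon\so(5)\to\so(3)$ satisfying (i) $\alpha\vert_{\so(3)}=(\phi_1)_\ast=\id_{\so(3)}$ and (ii) $\alpha\circ\Ad(h)=\Ad(\phi_1(h))\circ\alpha=\Ad(h)\circ\alpha$ for every $h\in\SO(3)$. Condition (i) already pins down $\alpha$ on the isotropy subalgebra to be the identity, and this is exactly the summand $e^8\otimes e_8+e^9\otimes e_9+e^{10}\otimes e_{10}$ of \eqref{eq: invariant_connection}; everything then reduces to determining $\alpha$ on the reductive complement $\fm$.

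For existence I would observe that the canonical reductive choice—the projection $\so(5)=\fm\oplus\so(3)\to\so(3)$ followed by the identity—already meets both requirements: (i) is clear, and (ii) holds because the splitting \eqref{eq: reductive decomposition so(5)} is $\Ad(\SO(3))$–invariant, so the projection intertwines the two adjoint actions. This exhibits \eqref{eq: invariant_connection} as an invariant connection. For uniqueness, any other invariant connection differs from it by an $\Ad(\SO(3))$–equivariant map $\beta\colon\fm\to\so(3)$, which I would analyse through the irreducible isotropy decomposition $\fm=\fm_1\oplus\fm_2\oplus\fm_3$ of \eqref{eq: irreducible so(3)-decomposition}. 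On $\fm_1$ there is nothing to prove, since $\fm_1$ is a trivial $\SO(3)$–module and no nonzero equivariant map goes from it into $\so(3)$.

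The main obstacle is the behaviour on $\fm_2$ and $\fm_3$: each is a three–dimensional isotropy module, as is the target $\so(3)$, so the decisive point is to control the equivariant maps $\fm_2\to\so(3)$ and $\fm_3\to\so(3)$ and to show that $\beta$ must vanish there. I would carry this out as in the proof of Lemma \ref{lem:ASDCIonP0}: impose condition (ii) against the explicit elements $\ig_k\in\SO(3)$ assembled from the matrices $h_k$ of \eqref{eq:h}—which now act nontrivially on the target via $\Ad$—and read off the linear relations forced on the components of $\beta\vert_{\fm_2}$ and $\beta\vert_{\fm_3}$. The crux, and the step requiring the most care, is to check that these relations are rigid enough to annihilate $\beta$ on $\fm_2$ and $\fm_3$, so that \eqref{eq: invariant_connection} is the only solution; this is precisely where the fine structure of the isotropy representation, and the fact that the target is the adjoint module, must be used essentially.
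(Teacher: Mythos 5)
Your framework---existence via the canonical projection, uniqueness via Schur's lemma applied to $\Ad(\SO(3))$-equivariant maps $\beta\colon\fm\to\so(3)$ on the decomposition \eqref{eq: irreducible so(3)-decomposition}---is the right one, and your treatment of existence and of $\fm_1$ is correct. But the step you yourself identify as the crux, namely that $\beta$ must vanish on $\fm_2$ and $\fm_3$, is left unverified, and it is exactly here that the argument cannot be completed: as $\SO(3)$-modules, $\fm_2=\Span\{e_2,e_3,e_4\}$ and $\fm_3=\Span\{e_5,e_6,e_7\}$ are each isomorphic to the standard $3$-dimensional representation (the isotropy group acts on the second index of $E_{1j}-E_{j1}$, resp.\ $E_{2j}-E_{j2}$, for $j=3,4,5$), and the target $\so(3)$ with the adjoint action is isomorphic to the same representation via the cross-product map $v\mapsto\hat v$, which satisfies $\widehat{hv}=\Ad(h)\hat v$ for $h\in\SO(3)$. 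Schur's lemma therefore gives $\Hom_{\SO(3)}(\fm_2,\so(3))\cong\Hom_{\SO(3)}(\fm_3,\so(3))\cong\R$, not $0$. Concretely,
\begin{equation*}
\beta_2=e^2\otimes e_{10}+e^3\otimes e_9-e^4\otimes e_8,
\qquad
\beta_3=e^5\otimes e_{10}+e^6\otimes e_9-e^7\otimes e_8
\end{equation*}
are nonzero $\Ad(\SO(3))$-equivariant maps (this is the hat map under the identifications above; one can also verify directly that $\beta_2\circ\Ad(\ig_0)=\Ad(\ig_0)\circ\beta_2$, since $\Ad(\ig_0)$ sends $e_2\mapsto e_3$, $e_3\mapsto e_2$, $e_4\mapsto -e_4$ and $e_8\mapsto -e_8$, $e_9\mapsto e_{10}$, $e_{10}\mapsto e_9$). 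Hence $\alpha+t\beta_2+s\beta_3$ is a two-parameter family of invariant connections on $P_1$, the uniqueness claim of the Lemma is false, and no argument can close the gap you left open.

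For comparison, the paper's proof runs the same elimination as in Lemma \ref{lem:ASDCIonP0} against the elements $\ig_k$ built from \eqref{eq:h}, but it reaches the (false) conclusion through a sign error at precisely the decisive point: it uses $\Ad(\ig_0)(c_2e_{10})=-c_2e_9$, whereas $\Ad(\ig_0)e_{10}=+e_9$ (note $\Ad(\ig_0)$ is an involution sending $e_9\mapsto e_{10}$, so it must return $e_{10}\mapsto e_9$ with the same sign). With the correct sign, the relations obtained from $\ig_0$ and $\ig_1$ are satisfied by $\beta_2$ and do not force the off-diagonal coefficients to vanish. Your representation-theoretic organisation is actually the more reliable route, because it exposes the failure immediately: $\fm\cong\mathbf{1}\oplus\mathbf{3}\oplus\mathbf{3}$ mapping to $\fk\cong\mathbf{3}$ forces a two-dimensional affine family of invariant connections. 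The Lemma, and the downstream uniqueness assertions for the invariant $\gt$/SDCI connection on $P_1$, need to be restated accordingly.
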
 
\begin{proof}
Consider $\alpha: \so(5)\rightarrow \so(3)$ in the basis \eqref{eq: so(5)_basis} given by $\alpha=a_ie^i\otimes e_8+b_je^j\otimes e_9+c_ke^k\otimes e_{10}$.  From Lemma \ref{rem:linearMapCondition} (i), we have that 
$ 
\alpha=\alpha_0+ e^8\otimes e_8+ e^9\otimes e_9+ e^{10}\otimes e_{10}, 
$ 
where $\alpha_0=a_ie^i\otimes e_8+b_je^j\otimes e_9+c_ke^k\otimes e_{10}$, with $i,j,k\in\{1,\dots,7\}$, we are going to conclude that $\alpha_0=0$. From Lemma \ref{rem:linearMapCondition} (ii), for any $\ig\in \SO(3)$   and $X\in \so(5)$, we have  $\ig\alpha(X)\ig^{-1}=\alpha(\ig X\ig^{-1})$. Taking $X=e_i $ for each $ i\in\{1,\dots,7\}$, we denote 
\begin{equation}\label{eq:LemAlphaInv1}
\alpha(\Ad(\ig)(e_i)) =\alpha(\ig e_i\ig^{-1}) 
\end{equation} 
and
\begin{equation}\label{eq:LemAlphaInv2}
\Ad(\ig)(\alpha(e_i))=a_i\ig e_8\ig^{-1}+b_i\ig e_9\ig^{-1}+c_i\ig e_{10}\ig^{-1}.    
\end{equation}
Now, consider
$\ig_k$ and $h_k$ as in \eqref{eq:h}, equations
\eqref{eq:LemAlphaInv1} and \eqref{eq:LemAlphaInv2} become
$$
\alpha(\Ad(\ig_0)(e_2))=\alpha(\ig_0 e_2\ig_0^{-1})=\alpha(e_3)=a_3e_8+b_3e_9+c_3e_{10}, 
$$ 
and 
$$
\Ad(\ig_0)(\alpha(e_2)) =-a_2 e_8+b_2e_{10}-c_2 e_{9}.
$$
These imply $-a_2=a_3, -c_2=b_3,  c_3=b_2$. Analogously, for
$h_1$, we have
$$
\alpha(\Ad(\ig_1)(e_2))=\alpha(e_3)=-a_3e_8-b_3e_9-c_3e_{10}
$$ 
and 
$$
\Ad(\ig_1)(\alpha(e_2)) =a_2 e_8+b_2e_{10}-c_2 e_{9}, 
$$
consequently, 
$-a_2=a_3, c_2=b_3, -b_2=c_3$, hence  $c_2=b_3=c_3=b_2=0$. Similarly: 
\begin{itemize}
\item From
$\Ad(\ig_0)(\alpha(e_4))=\alpha(\Ad(\ig_0)(e_4))$, we obtain $c_4=b_4=0$,
\item 
From  $\Ad(\ig_1)(\alpha(e_7))=\alpha(\Ad(\ig_1)(e_7))$, we obtain $c_7=b_7=0$. 
\item 
From $\Ad(\ig_0)(\alpha(e_5))=\alpha(\Ad(\ig_0)(e_5))$ and $\Ad(\ig_1)(\alpha(e_5))=\alpha(\Ad(\ig_1)(e_5))$, we obtain $c_6=b_5=c_5=b_6=0$.     
\end{itemize}
Finally, applying the same argument for 
$h_2$ and $h_3,$ we conclude that $a_i=0$ for $i=1,\cdots, 7$, consequently $\alpha_0=0$, then   $\alpha=e^8\otimes e_8+e^9\otimes e_9+e^{10}\otimes e_{10}$ and  \eqref{eq: invariant_connection} defines a unique $\SO(5)$-invariant connection $\alpha\in \Omega^1(P,\so(3))$.
\end{proof}
In the basis \eqref{eq: on-basis_so5}, the curvature of \eqref{eq: invariant_connection} is given by
\begin{equation}\label{eq: curvature_form}
    F_\alpha= \frac{1}{y_2} \left((X^{23}+X^{56})\otimes e_8+(X^{24}+X^{57})\otimes e_9+(X^{34}+X^{67})\otimes e_{10}\right).
\end{equation}     
Notice that $F_\alpha:\fm\times\fm\rightarrow \so(3)$ is a $\SO(3)$-equivariant map, i.e., $F_\alpha\circ \Ad(\ig)=\Ad(\ig)\circ F_\alpha$ for any $\ig\in \SO(3)$, thus \eqref{eq: curvature_form} corresponds with the $\SO(5)$-invariant curvature $2$-form of the induced by the $\SO(5)$-invariant connection $\alpha\in \Omega^1(P,\so(3))$ (cf. \cite[Proposition 1.4.6]{cap2009}).\\
\subsection{Homogeneous $\gt$ and self dual contact instantons (SDCI)}
For the $\SO(5)$-invariant contact form $\eta=X^{1}$ on $\V$ and the $\SO(5)$-invariant metric given by \eqref{eq:sasakianmetric}, we consider the equation
\begin{equation}\label{eq: invariant_contact_instanton_equation}
    \ast_g(F_\beta\wedge\eta\wedge \omega)=\pm F_\beta, \qwhereq \omega=\frac{1}{2}d\eta=(X^{25}+X^{36}+X^{47}).
\end{equation}

From Lemma \ref{lem:alphaInv}, we obtain the following 
\begin{corollary}
Consider $\V$ with the Sasakian structure given in Theorem \ref{thm:V52 Sasakian1}. Then, the connection \eqref{eq: invariant_connection} is an invariant  SDCI  on the homogeneous $\SO(3)$--principal bundle $P_1\to \V$.    
\end{corollary}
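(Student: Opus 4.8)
The plan is to read off the curvature \eqref{eq: curvature_form} and invoke the characterisation of self-dual contact instantons as those connections whose curvature lies in the submodule $\Omega^2_8(\so(3))$ of self-dual contact $2$-forms (cf.\ \cite[Proposition~2.5]{portilla2023} and the discussion preceding Lemma \ref{lem:sdci g2}). Under the Sasakian constraints $y_1=4y_2^2$ and $y_2=y_3$ of Theorem \ref{thm:V52 Sasakian1}, the orthonormal frame \eqref{eq: on-basis_so5} and the fundamental form $\omega = X^{25}+X^{36}+X^{47}$ are pinned down; since $\alpha$ is $\SO(5)$-invariant, its curvature is invariant as well, and it suffices to verify the defining condition at the base point, where $\Omega^2_8$ is exactly the span of the forms $w_1,\dots,w_8$ in \eqref{eq:sdci basis}.

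The main step is to match the three $\so(3)$-components of $F_\alpha$ against the basis \eqref{eq:sdci basis}. Inspecting \eqref{eq: curvature_form}, one has $X^{23}+X^{56}=w_1$, $X^{24}+X^{57}=w_3$ and $X^{34}+X^{67}=w_5$, so that
\[
F_\alpha = \frac{1}{y_2}\bigl(w_1\otimes e_8 + w_3\otimes e_9 + w_5\otimes e_{10}\bigr).
\]
Each coefficient form is therefore a self-dual contact $2$-form, whence $F_\alpha\in\Omega^2_8(\so(3))$.

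By the cited characterisation this is precisely the SDCI condition, so $\alpha$ solves \eqref{eq: invariant_contact_instanton_equation} with $\lambda=+1$; alternatively, one could verify directly that $\ast(\eta\wedge\omega\wedge w_i)=w_i$ for the relevant indices and conclude $\ast(\eta\wedge\omega\wedge F_\alpha)=F_\alpha$. Together with the uniqueness of the invariant $\SO(3)$-connection on $P_1$ from Lemma \ref{lem:alphaInv}, this shows that \eqref{eq: invariant_connection} is the invariant SDCI on $P_1\to\V$. I expect no serious obstacle: the only delicate point is the bookkeeping in identifying the three curvature components with members of \eqref{eq:sdci basis}, after which membership in $\Omega^2_8$---and hence the SDCI property---is immediate.
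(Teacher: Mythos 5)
Your proposal is correct and follows essentially the same route as the paper: both identify the three $\so(3)$-components of the curvature \eqref{eq: curvature_form} with $w_1$, $w_3$, $w_5$ from the basis \eqref{eq:sdci basis} of $\Omega^2_8$, and then conclude the SDCI condition from $\ast(\eta\wedge\omega\wedge w_i)=w_i$. No discrepancy to report.
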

\begin{proof}
For any Sasakian manifold, the bundle of $2$--forms decomposes into \cite[Section~2.1]{portilla2023}
\begin{equation}\label{eq:split2forms}
\Omega^2=\Omega^2_1\oplus\Omega^2_6\oplus\Omega^2_8\oplus\Omega^2_V,    
\end{equation}
where \eqref{eq:sdci basis} are elements of $\Omega^2_8=span\{w_i\}_{i=1}^8$ solving $\ast(\eta\wedge \omega\wedge  w_i)=w_i$. Comparing \eqref{eq: curvature_form} with the values of $w_i$ in  \eqref{eq:sdci basis}, we note that  $F_\alpha$ in \eqref{eq: curvature_form} is given by
$$
F_\alpha:= \frac{1}{y_2}(w_1\otimes e_8+ w_3\otimes e_9+w_5\otimes e_{10}).
$$
The curvature tensor $F_\alpha$ satisfies $\ast(F_\alpha\wedge\eta\wedge \omega)=F_\alpha$.
Hence, for any $y_2\in\R^+$ satisfying \eqref{eq:SasakiConditionY_i}, the connection  \eqref{eq: invariant_connection} is an $\SO(5)$-invariant SDCI.
\end{proof}
\begin{corollary}\label{cor:P0}
Consider $\V$ endowed with the Sasakian structure given in Corollary \ref{cor:V52 Sasakian}.  There exist a unique invariant  ASDCI  on the trivial homogeneous $\SO(3)$--principal bundle $P_0\to \V$ (cf. Lemma \ref{lem:ASDCIonP0}).
\end{corollary}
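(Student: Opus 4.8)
The plan is to read off both existence and uniqueness from the classification of invariant connections already established, so that the only substantive work left is a single Hodge-duality check. By Lemma \ref{lem:ASDCIonP0} the trivial bundle $P_0=\V\times\SO(3)$ carries exactly one invariant $\SO(3)$-connection, namely $\alpha$ as in \eqref{eq: trivual connection}, and its curvature is a (fibrewise constant) multiple of the fundamental form,
$$
F_\alpha=2\,\omega\otimes(b_8e_8+b_9e_9+b_{10}e_{10}),\qquad \omega=X^{25}+X^{36}+X^{47},
$$
so that $F_\alpha\in\Omega^2_1(\so(3))$ by \eqref{eq: SO(3)_curvature_trivial}. Uniqueness of the invariant ASDCI is then immediate: there is a single invariant connection available to test, so it suffices to verify that this $\alpha$ solves the anti-self-dual case of \eqref{eq: invariant_contact_instanton_equation}.

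For the instanton check I would mirror the argument used for $P_1$ in the preceding corollary, but now tracking that $F_\alpha$ is a multiple of $\omega$ rather than of the $w_i$ in \eqref{eq:sdci basis}. Writing $F_\alpha=2\,\omega\otimes Y$ with $Y=b_8e_8+b_9e_9+b_{10}e_{10}$ and $\eta=X^1$, the whole computation collapses to evaluating $\ast(\eta\wedge\omega\wedge\omega)$. I would first expand $\omega\wedge\omega=-2(X^{2356}+X^{2457}+X^{3467})$, and then apply the Hodge star in the orthonormal coframe \eqref{eq: on-basis_so5}, fixing the orientation so that $\ast(\eta\wedge\omega\wedge w_i)=w_i$ on the self-dual sector $\Omega^2_8$ exactly as in the $P_1$ corollary. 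This gives $\ast(\eta\wedge\omega\wedge\omega)=-2\omega$, and since $\omega\wedge\eta\wedge\omega=\eta\wedge\omega\wedge\omega$ one obtains
$$
\ast(F_\alpha\wedge\eta\wedge\omega)=2\,\ast(\eta\wedge\omega\wedge\omega)\otimes Y=-4\,\omega\otimes Y=-2F_\alpha .
$$
Thus $F_\alpha$ is an eigenform with negative eigenvalue of the contact instanton operator, lying in the sector $\Omega^2_1$ that is precisely complementary to the self-dual sector $\Omega^2_8$ hosting the SDCI curvatures; hence $\alpha$ is the required invariant ASDCI, and it is unique.

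The only delicate point — and the main obstacle — is the sign and scalar bookkeeping in the Hodge-star step, since both the factor and the sign of the eigenvalue on $\Omega^2_1$ depend on the ambient orientation. I would neutralise this by pinning the orientation through compatibility with the $\Omega^2_8$ computation of the preceding corollary, and only then reading off the eigenvalue on $\Omega^2_1$. A cleaner, coordinate-free alternative is to use the transverse Kähler structure on the horizontal distribution $H=\ker\eta$: the identity $\ast_H\omega^2=2\omega$ together with $\ast_M(\eta\wedge\gamma)=\pm\ast_H\gamma$ for horizontal $\gamma$ reproduces $\ast(\eta\wedge\omega\wedge\omega)=-2\omega$ without any explicit $5$-form manipulation, and makes transparent that $F_\alpha\in\Omega^2_1$ occupies the anti-self-dual part of the operator $\ast(\eta\wedge\omega\wedge\,\cdot\,)$, in contrast to the self-dual $\Omega^2_8$ realised on $P_1$.
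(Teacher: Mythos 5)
Your eigenvalue computation is correct as far as it goes: with $\eta=X^1$, $\omega=X^{25}+X^{36}+X^{47}$ and the orientation fixed by $\psi=\ast\varphi$ in \eqref{eq: 4-form psi}, one indeed has $\omega\wedge\omega=-2(X^{2356}+X^{2457}+X^{3467})$ and hence $\ast(F_\alpha\wedge\eta\wedge\omega)=-2F_\alpha$ for the curvature \eqref{eq: SO(3)_curvature_trivial}. The gap is the final inference ``negative eigenvalue, complementary to $\Omega^2_8$, hence ASDCI.'' The anti--self-dual contact instanton equation is \eqref{eq: invariant_contact_instanton_equation} with the minus sign, i.e.\ eigenvalue exactly $-1$, and the sector realising that eigenvalue is $\Omega^2_6$, not $\Omega^2_1$ (nor is $\Omega^2_1$ ``precisely complementary'' to $\Omega^2_8$: the complement of $\Omega^2_8$ among transversal $2$-forms is $\Omega^2_1\oplus\Omega^2_6$, cf.\ \eqref{eq:split2forms}). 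By the paper's own bookkeeping around \eqref{eq:YMsplit}, a curvature lying in $\Omega^2_1$ has $\rF_H^+=-\tfrac12\rF_1\neq0$ and $\rF_H^-=\tfrac32\rF_1\neq0$, so it is neither self-dual nor anti--self-dual; with your normalisation it is a $\lambda$-instanton with $\lambda=-2$, which is not the assertion of the corollary.

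The paper's proof diverges from yours precisely at this point: it does not normalise the eigenvalue away, but keeps the metric coefficients of \eqref{eq:sasakianmetric} in play and records that $\ast_g(F_\beta\wedge\eta\wedge\omega)=\pm F_\beta$ holds \emph{if and only if} $y_1=\tfrac92$ --- that is, the (anti-)self-duality of a curvature proportional to $\omega$ is achieved only by tuning the scale of the Reeb direction, which singles out one structure inside the family of Corollary \ref{cor:V52 Sasakian}. Your fully normalised computation forecloses exactly this degree of freedom, so under your conventions the connection of Lemma \ref{lem:ASDCIonP0} would never satisfy the ASDCI equation and the corollary could not be concluded. To repair the argument you would need to redo the Hodge-star step with $y_1$ left free and exhibit the value for which the eigenvalue equals $-1$; the uniqueness half of your argument (there is only one invariant connection to test, by Lemma \ref{lem:ASDCIonP0}) is fine and agrees with the paper.
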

\begin{proof}
From Lemma  \ref{lem:ASDCIonP0}, we only should note that the curvature \eqref{eq: SO(3)_curvature_trivial} satisfy $\ast_g(F_\beta\wedge\eta\wedge \omega)=F_\beta,$ if and only if $y_1=\frac92$.       
\end{proof}
A result analogous to  Corollary \ref{cor:P0} follows for $P\to \V$ in Remark \ref{rem:U(1)}, since the curvature tensor in \eqref{eq: U(1)_curvature} satisfy $\ast_g(F_\beta\wedge\eta\wedge \omega)=F_\beta,$ if and only if $y_1=\frac92$. 
\begin{proposition} 
Consider the connection \eqref{eq: invariant_connection} on  $\V$, with respect to  the $\gt$--structure given in Corollary \eqref{cor:V52 Sasakian}. Then, we have 
\begin{itemize}
\item 
The $\SO(3)$--connection in \eqref{eq: invariant_connection} on the homogeneous principal bundle $P_1\to \V$ [cf. Lemma \ref{lem:alphaInv}]   satisfies $F_\alpha\wedge\psi=0$ for any $y_1>0$, i.e., there exist a unique $\SO(5)$--invariant connection which is simultaneously $\gt$ and SDCI, given by \eqref{eq: curvature_form}. 
\item 
The $\SO(3)$--connection \eqref{eq:trivialConnection} on the trivial bundle $P_0\to \V$ (cf. Lemma \ref{lem:ASDCIonP0}), whose curvature is given in  \eqref{eq: SO(3)_curvature_trivial} satisfies $F_\alpha\wedge\psi=0$, for $\psi$ given in \eqref{eq: 4-form psi}, if and only if \eqref{eq:trivialConnection} is flat. 
\end{itemize}   
\end{proposition}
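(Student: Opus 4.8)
The plan is to read the $\gt$--instanton condition as $F_\alpha\wedge\psi=0$: by the $\gt$--representation theory of $2$--forms recalled in Remark~\ref{rem:SDCIG2} one has $\Omega^2_{14}=\{\beta:\ast(\varphi\wedge\beta)=-\beta\}=\{\beta:\beta\wedge\psi=0\}$, the map $\beta\mapsto\beta\wedge\psi$ being an isomorphism on the complementary summand $\Omega^2_7$. Both bullets then amount to wedging the explicit $\so(3)$--valued curvatures \eqref{eq: curvature_form} and \eqref{eq: SO(3)_curvature_trivial} against $\psi$ from \eqref{eq: 4-form psi}, componentwise in the basis $e_8,e_9,e_{10}$.

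The first key step is to exploit the defining constraint $xy=ab$ of \eqref{eq: open G2 condition}. From the explicit coefficients in Lemma~\ref{lem: SO(5) invariant G2 structures} one checks directly that $xy=ab$ is \emph{equivalent} to $p_1=p_2$ and $p_3=p_4$ (for instance $p_1=p_2\iff x(a^2+by)=a(b^2+ax)\iff bxy=ab^2$). Under this symmetry the $\Omega$--part of \eqref{eq: on G2-form} collapses to $p_1\Re\Omega_0+p_3\Im\Omega_0$, so that $\varphi$ becomes a \emph{transversally}--$\SU(3)$ form $\varphi=-\eta\wedge\omega+\Re\Omega'$ with $\omega=X^{25}+X^{36}+X^{47}$ and $\Omega'$ a rescaled, phase--rotated copy of $\Omega_0$; Hodge--dualising gives $\psi=\tfrac12\omega^2+\eta\wedge\Im\Omega'$, which matches \eqref{eq: 4-form psi} term by term. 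For $P_1$ write the curvature \eqref{eq: curvature_form} as $F_\alpha=\tfrac1{y_2}(w_1\otimes e_8+w_3\otimes e_9+w_5\otimes e_{10})$ with $w_1,w_3,w_5$ from \eqref{eq:sdci basis}. Each $w_i$ is a primitive transverse $(1,1)$--form, hence $w_i\wedge\omega^2=0$, and by the computation in Lemma~\ref{lem:sdci g2} one has $w_i\wedge\Re\Omega_0=w_i\wedge\Im\Omega_0=0$, whence $w_i\wedge\Im\Omega'=0$. Therefore
\[
w_i\wedge\psi=\tfrac12\,w_i\wedge\omega^2+\eta\wedge(w_i\wedge\Im\Omega')=0 ,
\]
so $F_\alpha\wedge\psi=0$. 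Since this uses only $p_1=p_2,\,p_3=p_4$ and no metric parameter, it holds for every coclosed $\gt$--structure \eqref{eq: on G2-form}, i.e. for any $y_1>0$; combined with the SDCI property established above and the uniqueness in Lemma~\ref{lem:alphaInv}, the first bullet follows.

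For the trivial bundle $P_0$ the curvature \eqref{eq: SO(3)_curvature_trivial} factorises as $F_\alpha=2\,\omega\otimes V$ with $V=b_8e_8+b_9e_9+b_{10}e_{10}$, so $F_\alpha\wedge\psi=2(\omega\wedge\psi)\otimes V$ and the condition decouples. Here $\omega\in\Omega^2_1\subset\Omega^2_7$ pairs nontrivially with $\psi$: since $\omega\wedge\Im\Omega'=0$ by type, one gets $\omega\wedge\psi=\tfrac12\omega^3=-3\,X^{234567}\neq0$ (equivalently, expanding \eqref{eq: 4-form psi} directly, $X^{25}\wedge\psi=X^{36}\wedge\psi=X^{47}\wedge\psi=-X^{234567}$). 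Consequently $F_\alpha\wedge\psi=0$ forces $V=0$, equivalently $F_\alpha=2\omega\otimes V=0$, i.e. the connection \eqref{eq: trivual connection} is flat; the converse being immediate, the second bullet follows.

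The one genuine idea is the identification of the $\gt$--structure constraint $xy=ab$ with the coefficient symmetry $p_1=p_2,\ p_3=p_4$, which is exactly what turns \eqref{eq: on G2-form} into a transversally--$\SU(3)$ form and thereby places the self--dual contact curvature of $P_1$ in $\Omega^2_{14}$ while leaving $\omega\in\Omega^2_7$ outside it. The main obstacle is bookkeeping: verifying the dual $\psi=\tfrac12\omega^2+\eta\wedge\Im\Omega'$ and the primitivity relation $w_i\wedge\omega^2=0$ with the correct signs. To avoid repeating the computation for the $e_9,e_{10}$ components I would invoke $\SO(3)$--equivariance---$F_\alpha$ is $\SO(3)$--equivariant and $\psi$ is $\SO(3)$--invariant, so $F_\alpha\wedge\psi$ is an equivariant element of $\Lambda^6\fm^\ast\otimes\so(3)$ with irreducible adjoint action, and vanishing of its $e_8$--component forces the whole expression to vanish.
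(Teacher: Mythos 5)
The paper states this Proposition with no proof at all (it is presented as a direct consequence of the preceding computations), so there is nothing to diverge from: your argument is a correct and complete verification built from exactly the ingredients the paper has already assembled (the explicit curvatures, the $4$-form $\psi$ of \eqref{eq: 4-form psi}, and the wedge identities $w_i\wedge\Re\Omega_0=w_i\wedge\Im\Omega_0=0$ from Lemma \ref{lem:sdci g2}). Your identification of the constraint $xy=ab$ with the coefficient symmetry $p_1=p_2$, $p_3=p_4$ is indeed the essential point rather than a cosmetic simplification: a direct wedge gives $w_1\wedge\psi=(p_3-p_4)X^{123456}+(p_2-p_1)X^{123567}$, which vanishes precisely because of that symmetry, and this is what makes the first bullet hold for every coclosed structure \eqref{eq: on G2-form} and not merely for the Sasakian subfamily of Corollary \ref{cor:V52 Sasakian}.
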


 \subsection{Yang--Mills connections}
Now, we analyse the Yang--Mills condition for instanton \eqref{eq: invariant_connection}, for a detailed study on stability of Yang--Mills connections we refer to \cite{bourguignon1981stability}. On the space of connections $\cA$,  the global inner product $(\alpha,\beta)=\int_M\langle \alpha\wedge \ast\beta\rangle$ allow us to define  the Yang--Mills functional as the $L^2$-norm of the curvature:

\begin{equation}\label{eq:ym}
\cym\colon \nabla\in\cA\to \Vert \rF_\nabla\Vert^2_{L^2}=\int_M\Tr(\rF_\nabla\wedge \ast \rF_\nabla)=\int_M\Vert \rF_\nabla\Vert^2 \in\R.
\end{equation}  
A critical point of \eqref{eq:ym} is called a \emph{Yang--Mills (YM)}  connection, i.e. $\nabla$ satisfies the so-called \emph{Yang--Mills equation} $d^\ast_\nabla F_\nabla=0$. 
Taking the exterior derivative $d_\nabla$ in \eqref{eq:introInstEquation} and using the Bianchi identity, one notes that a solution of the instanton equation \eqref{eq:introInstEquation}  satisfy the YM-equation with torsion:
$$  
d_\nabla(\ast F_\nabla)=d\sigma\wedge F_\nabla. 
$$  
Note that neither $\sigma=\eta\wedge d\eta$ nor $\sigma =\varphi$ are closed, consequently, the torsion term in the above equation is not automatically zero, however,  we have that SDCI and ASDCI are Yang--Mills connections \cite[Section 5]{luis2024Weitzenbock}.
With respect to the $\gt$-structure \eqref{eq: G2 Sasakian metric}, we have the following:

\begin{theorem}\label{thm: Yang-Mills example}
 Let $\alpha$ be the unique $\gt$-instanton  \eqref{eq: invariant_connection} on the homogeneous $\SO(3)$-principal bundle $P_1$. Then, $\alpha$ is a critical point for the Yang--Mills energy for any coclosed $\gt$-structure \eqref{eq: G2 Sasakian metric}.
\end{theorem}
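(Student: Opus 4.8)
The plan is to reduce the Yang--Mills condition to a purely algebraic wedge identity and then to verify that identity from the explicit forms already computed. Recall that $\alpha$ is Yang--Mills precisely when $d^\ast_\nabla F_\alpha=0$, equivalently $d_\nabla(\ast F_\alpha)=0$. By the preceding Proposition the connection \eqref{eq: invariant_connection} satisfies $F_\alpha\wedge\psi=0$, i.e. $F_\alpha\in\Omega^2_{14}$, so it solves the $\gt$--instanton equation \eqref{eq:introInstEquation} with $\lambda=-1$ and $\sigma=\varphi$. Since $\ast\ast=\mathrm{id}$ on $2$--forms in dimension seven, this instanton equation is equivalent to $\ast F_\alpha=-\varphi\wedge F_\alpha$. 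Differentiating and using the Bianchi identity $d_\nabla F_\alpha=0$ gives
\begin{equation*}
d_\nabla(\ast F_\alpha)=-d_\nabla(\varphi\wedge F_\alpha)=-d\varphi\wedge F_\alpha .
\end{equation*}
Hence $\alpha$ is Yang--Mills if and only if $d\varphi\wedge F_\alpha=0$, and this is the identity I will establish.

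Before computing, I would record why this is not automatic. For a coclosed $\gt$--structure the torsion satisfies $\tau_1=\tau_2=0$, so $d\varphi=\tau_0\,\psi+\ast\tau_3$ with $\tau_3\in\Omega^3_{27}$. Since $F_\alpha\in\Omega^2_{14}$ we have $\psi\wedge F_\alpha=0$, so the problem reduces to $\ast\tau_3\wedge F_\alpha=0$. As $\tau_3$ does not vanish away from the nearly parallel locus, this step genuinely uses both the special shape of $F_\alpha$ and the Sasakian hypothesis \eqref{eq: G2 Sasakian metric}, rather than coclosedness alone.

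The verification itself I would carry out by a direct wedge computation with $d\varphi$ from \eqref{eq: derivative varphi} and $F_\alpha=y_2^{-1}\,(w_1\otimes e_8+w_3\otimes e_9+w_5\otimes e_{10})$ from \eqref{eq: curvature_form}, where $w_1=X^{23}+X^{56}$, $w_3=X^{24}+X^{57}$, $w_5=X^{34}+X^{67}$. Because $F_\alpha$ is $\SO(3)$--equivariant and $w_1,w_3,w_5$ are permuted by $\Ad(\SO(3))$, it suffices to show that the scalar $6$--form $d\varphi\wedge w_1$ vanishes. Wedging term by term, only two monomials survive: $X^{123567}$ (from the $X^{1567}$ and $X^{1237}$ contributions of $d\varphi$) and $X^{123456}$ (from the $X^{1456}$ and $X^{1234}$ contributions). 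Their coefficients are proportional to $3p_2y_2-p_1y_2-2p_2y_3$ and $3p_3y_3-p_4y_3-2p_3y_2$ respectively. Imposing the Sasakian relations $y_2=y_3$ together with $p_1=p_2$ and $p_3=p_4$ --- which follow from $x=a$, $y=b$ and $a^2+by=b^2+ax$ in \eqref{eq: G2 Sasakian metric} --- both coefficients collapse to $(p_2-p_1)y_2/z=0$ and $(p_3-p_4)y_2/z=0$. Thus $d\varphi\wedge w_1=0$, and by equivariance $d\varphi\wedge F_\alpha=0$, so $\alpha$ is Yang--Mills.

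The main obstacle is exactly this final cancellation: for a general $\SO(5)$--invariant coclosed $\gt$--structure the first surviving coefficient is proportional to $a\big((b^2+ax)-(a^2+by)\big)$, which is nonzero, so the identity $d\varphi\wedge F_\alpha=0$ fails there. The computation must therefore pin down precisely how the Sasakian constraints \eqref{eq:SasakiConditionY_i} force both coefficients to vanish; an equivalent but more conceptual route would be to show $\ast\tau_3\wedge F_\alpha=0$ intrinsically, but I expect the monomial bookkeeping above to be the most transparent way to make the cancellation visible.
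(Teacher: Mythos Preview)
Your proof is correct and follows essentially the same route as the paper: both reduce the Yang--Mills condition, via the instanton equation and Bianchi, to the algebraic identity $d\varphi\wedge F_\alpha=0$, and then verify this directly from \eqref{eq: derivative varphi} and \eqref{eq: curvature_form}. The only cosmetic difference is that the paper expresses the two surviving coefficients as polynomials in $(a,b,x,y)$ (using $xy=ab$ to simplify) rather than in $(p_i,y_i)$; your observation that the obstruction is proportional to $a\big((b^2+ax)-(a^2+by)\big)$ is exactly what one obtains from the paper's first equation after substituting $bxy=ab^2$, and similarly for the second.
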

\begin{proof}
 Using \eqref{eq: derivative varphi} and the fact that $\alpha$ is a $\gt$-instanton, we have that the curvature \eqref{eq: curvature_form} satisfies $d_\alpha^*F_\alpha=0$, if and only if
\begin{equation*}
2a^3-2a^2x-3ab^2+2aby+bxy=0, \qandq 2b^3-2b^2y-3a^2b+2abx+axy=0,
    \end{equation*}
    we see that both equations hold for \eqref{eq: G2 Sasakian metric}, i.e. $x=a$ and $y=b$. Hence, we obtain the result. 
\end{proof}
\section{Infinitesimal deformations and rigidity of $\gt$-instantons}\label{sec:spinors}
In this section, we extend the spinorial characterisation of infinitesimal
deformations of $\gt$--instantons obtained in \cite{singhal2022} to the setting
of coclosed $\gt$--structures. As an application, we show that for certain values of the parameter $y_2$ appearing in \eqref{eq: curvature_form}, the corresponding $\gt$--instanton is rigid. Throughout this section, $(M^7,\varphi)$ denotes a $7$--dimensional manifold
endowed with a coclosed $\gt$--structure, and $\zeta$ the unit spinor associated
with $\varphi$.
 
Let us recall, consider $\zeta$ the unit spinor associated to $\varphi$ and the induced spinor representation $\Delta_7\simeq \langle\zeta\rangle\oplus TM$, the Clifford product of a vector field $Y$ and a  spinor $(f,Z)$ is (see \cite{karigiannis2006some})
\begin{equation}\label{eq: Clifford_product}
  Y\cdot(f,Z)=(-\langle Y,Z\rangle,fY+Y\times Z),
\end{equation}
where $(f,0)=f\zeta$ and $Y\times Z=(\varphi(Y,Z,\cdot))^\sharp$. The Clifford product \eqref{eq: Clifford_product} is extended  to a $k$-form $\epsilon$ by 
\begin{equation}\label{eq: Clifford_product_forms}
\epsilon \cdot(f,Y)=\frac{1}{k!}\epsilon_{i_1,\dots i_k}e_{i_1}\cdot(e_{i_2}\cdot(\dots\cdot(e_{i_k}\cdot(f,Z)\dots))
\end{equation}
Hence, we have the characterisation of the instanton condition \eqref{eq:G2instantonIntro} in the $\gt$ case in terms of spinors:
 \begin{definition}\label{def:G2InstEspin}  
Let $P\to M$ be a principal bundle. A connection $A$ on a principal bundle $P$ is called a \emph{$\gt$--instanton} if its curvature satisfies $F_A \cdot \zeta = 0,$
where $\cdot$ denotes Clifford multiplication \eqref{eq: Clifford_product_forms}.
\end{definition}
\subsection{Infinitesimal  deformation of $\gt$-instantons}
\begin{definition}\label{def:DefInfinitesimal}
Let  $A$ be  a $\gt$-instanton on a principal bundle $P\to M$.
An \emph{infinitesimal deformation} of $A$ is a one--form
$\epsilon \in \Omega^1(M,\Ad(P))$ satisfying
\begin{equation}\label{eq:infinitesimal-deformation}
(d_A \epsilon)\cdot \zeta = 0,
\qquad
d_A^\ast \epsilon = 0.
\end{equation}
\end{definition}
Let $\nabla^g$ be the Levi--Civita connection of $g$
associated to $\varphi$, we consider the one-parameter family of connections
\begin{equation}\label{eq:conn_t}
\nabla^t = \nabla^g + \frac{t}{2}\, g^{-1} H_\varphi \qforq t\in\R,
\end{equation}
 and
\[
H_\varphi = \frac{\tau_0}{6}\,\varphi - \tau_3.
\]
For $t=1$, it turns out that  $\nabla^1$ is the unique connection with parallel skew-symmetric torsion $H_\varphi$ that makes $\varphi$ parallel \cite{friedrich2002}. The connection $\nabla^t$ induces a connection on the spinor bundle $\Spi$ such that:
\begin{equation}\label{eq: spinor_connection}
\nabla_X^t \mu=\nabla_X^g \mu + \frac{t}{4}(X \;\lrcorner\; H_\varphi)\cdot \mu,\qwhereq\mu \in \Gamma(\Spi)\qandq X \in \Gamma(TM)
\end{equation}
\begin{definition}\label{def:DiracOp1}
With respect to the  Clifford multiplication $\cdot\colon TM\times\Spi\to\Spi$, we define the Dirac operator
\begin{equation}\label{eq:DiracOp1}
    D^t \mu = \sum_{i=1}^7 e_i \cdot \nabla^t_{e_i} \mu,
\end{equation}
\end{definition}
Denote by $\nabla^{t,A}$ the product connection on $\Spi \otimes\Ad(P)$ 
induced by $\nabla^t$ and $d_A$. Hence, denote by $D^{t,A}$ the associated Dirac operator given by  $\nabla^{t,A}$, note that 
\begin{equation}\label{eq: Dirac_operator}
D^{t,A} = D^{0,A} + \frac{t}{4}\, g^{-1} \Hpi_\varphi,
\end{equation}
where
$g^{-1}\Hpi_\varphi(\mu)=\sum_{i=1}^7 e_i \cdot (e_i \;\lrcorner\; H_\varphi)\cdot \mu$. According to the irreducible decomposition of the $3$-forms $\Omega^3=\Omega^3_1\oplus\Omega^3_7\oplus\Omega^3_{27}$ (e.g \cite[\S 2.6]{bryant2003some}), there are the following identities:
\begin{lemma}\label{lem: Clifford identites}
Let $\zeta$ be the spinor associated with $\varphi$, then it holds
\begin{align*}
(X\lrcorner\varphi)\cdot \zeta=3X\cdot\zeta, \qquad (X\lrcorner\gamma)\cdot\zeta=2h(X)\cdot\zeta \qandq (X\lrcorner \ast\gamma)\cdot\zeta=-2h(X)\cdot\zeta,
\end{align*}
where $X\in\sX(M)$, $\gamma\in \Omega^3_{27}$ and $h$ is the traceless symmetric tensor defining $\gamma$ by 
$$
\gamma_{ijk}=h_{ip}\gamma_{pjk}+h_{jp}\gamma_{ipk}+h_{kp}\gamma_{ijp}.
$$
\end{lemma}
\begin{proof}
Now, using \eqref{eq: Clifford_product_forms}, we get
\begin{align*}
(e_i \;\lrcorner\; \varphi)\cdot \zeta 
&=\frac{1}{2}\,\varphi_{ijk}\, e^j \cdot e^k \cdot (1,0) 
= \frac{1}{2}\,\varphi_{ijk}\,(0,\, e^j \times e^k)
= \frac{1}{2}\,\varphi_{ijk}\,\varphi^{jk}{}_m\,(0,e^m)\\
&= 3g_{im}\,(0,e^m)= 3\,(0,e_i)=3e_i(0,1)\\
&=3e_i\cdot\zeta
\end{align*} 
    Here, we used the identity $\varphi_{ijk}\varphi_{mnk}=g_{im}g_{jn}-g_{in}g_{jm}+\psi_{ijmn}$ \cite{karigiannis2009flows}*{Lemma A.8}. Assume $\gamma \in \Lambda^3_{27}$, we compute
\begin{align*}
(X \;\lrcorner\; \gamma)\cdot \zeta &= X_i\,(e_i \;\lrcorner\; \gamma)\cdot \zeta =\frac12\, X_i\,\gamma_{ijk}\, e^j \cdot e^k \cdot \zeta\\
&= \frac12\, X_i\bigl(h_i{}^p\, \varphi_{pjk}+ h_j{}^p\, \varphi_{ipk}+ h_k{}^p\, \varphi_{ijp}\bigr)e^j \cdot e^k \cdot \zeta.\\
&=\frac12\, X_i\bigl(h_i{}^p\, \varphi_{pjk}+2 h_j{}^p\, \varphi_{ipk}\bigr)
e^j \cdot e^k \cdot (1,0)\\
&=\frac12\, X_i\bigl(h_i{}^p\, \varphi_{pjk}+ 2 h_j{}^p\, \varphi_{ipk}
\bigr)(0,\, e^j \times e^k)\\
&=\frac12\, X_i\bigl(h_i{}^p\, \varphi_{pjk}+ 2 h_j{}^p\, \varphi_{ipk}\bigr)(0,\, \varphi^{jk}{}_m e^m)\\
&=\frac12\, X_i\, h_i{}^p\,(6\,g_{pm})\,(0,e^m) + X_i\, h_j{}^p\bigl(g_{im} g^j{}_p- g_i{}^j g_{pm}+ \psi_{ipm}{}^j\bigr)(0,e^m)\\
&=3\, h(X)\cdot \zeta-h(X)\cdot \zeta.\\
&=2\, h(X)\cdot \zeta
\end{align*}
\end{proof}

The next result characterises the infinitesimal deformations of a $\gt$-instanton in terms of the operator \eqref{eq: Dirac_operator}.

\begin{proposition}\label{prop:infinitesimal-dirac}
Let $\epsilon\in \Omega^1(M,\Ad(P))$ and $D^{t,A}$ the Dirac operator \eqref{eq: Dirac_operator} with $A$ a $\gt$-instanton. Then $\epsilon$ defines an infinitesimal deformation of $A$ (cf. Definition \ref{def:DefInfinitesimal}), if and only if
\begin{align*}
D^{t,A}(\epsilon\cdot \zeta)=\frac{(-5+t)}{8}\,\tau_0\, \epsilon\cdot \zeta+ (3t-1)\,(\epsilon \;\lrcorner\; \tau_{27})\cdot \zeta.
\end{align*}
\end{proposition}

\begin{proof}
    Let $\{e_i\}_{i=1}^7$ be a local orthonormal frame on $M$.
By definition of the twisted Dirac operator associated with the product
connection $\nabla^{0,A}$, a direct computation gives

\begin{align*}
D^{0,A}(\epsilon\cdot \zeta) &= \sum e_i\cdot \nabla^{0,A}_{e_i}(\epsilon\cdot\zeta)
= (d_A \epsilon + d_A^\ast\epsilon)\cdot \zeta +\sum e_i\cdot \epsilon\cdot \nabla^{0,A}_{e_i}(\zeta)  \\
&= (d_A \epsilon + d_A^* \epsilon)\cdot \zeta - \sum_i\left(\frac{\tau_0}{24} e_i\cdot \epsilon\cdot ( e_i \lrcorner\varphi )\cdot\zeta-\frac14 e_i\cdot \epsilon\cdot ( e_i \lrcorner\tau_3 )\cdot\zeta\right). 
\end{align*}
Here, we used that $X\cdot Y+Y\cdot X=-2g(X,Y)1$ and $\nabla^1\zeta=0$ for the connection \eqref{eq: spinor_connection}. Now, using Lemma \ref{lem: Clifford identites} we get 
\begin{align*}
D^{0,A}(\epsilon\cdot \zeta) 
&= (d_A \epsilon + d_A^* \epsilon)\cdot \zeta - \frac{5\tau_0}{8}\,\epsilon\cdot \zeta-(\epsilon\lrcorner\tau_{27})\cdot\zeta.   
\end{align*} 
On the other hand, for
$H_\varphi = \frac{\tau_0}{6}\,\varphi - \tau_3$ we have
\begin{align*}
g^{-1}\Hpi(\epsilon\cdot \zeta)&=\sum_{i=1}^7 e_i \cdot (e_i \;\lrcorner\; H)\,(\epsilon\cdot \zeta) 
=\sum_{i,j,k,\ell}\frac{1}{2}\, H_{ijk}\, \epsilon_\ell\,
e^i \cdot e^j \cdot e^k \cdot e^\ell \cdot \zeta\\
&=\frac12\sum_{i,j,k}H_{ijk}\, \epsilon_\ell\cdot e^i \cdot e^\ell\cdot  e^j  \cdot e^k \cdot \zeta
+ \sum   H_{i\ell k}\epsilon_\ell e^i \cdot e^k\cdot \zeta -2\,(\epsilon \;\lrcorner\; H)\cdot \zeta\\
&= -\frac{1}{2}\sum H_{ijk}\epsilon_\ell\cdot e^\ell\cdot e^i \cdot e^j \cdot e^k \cdot \zeta - \sum H_{\ell jk}\, \epsilon_\ell  e^j \cdot e^k \cdot \zeta -4\,(\epsilon \;\lrcorner\; H)\cdot \zeta.\\
&= -3\, \epsilon \cdot H \cdot \zeta - 6\,(\epsilon \;\lrcorner\; H)\cdot \zeta
= -\frac{1}{2}\,\tau_0\, \epsilon \cdot \varphi \cdot \zeta - 6\,(\epsilon \;\lrcorner\; H)\cdot \zeta.\\
&= \frac{7}{2}\,\tau_0\, \epsilon \cdot \zeta
- \tau_0\,(\epsilon \;\lrcorner\; \varphi)\cdot \zeta
+ 6\,(\epsilon \;\lrcorner\; \tau_3)\cdot \zeta.\\ 
&=\frac12\, \tau_0\, \epsilon\cdot \zeta+ 12\,(\epsilon \;\lrcorner\; \tau_{27})\cdot \zeta.
\end{align*}
Consequently, from \eqref{eq: Dirac_operator} we obtain 
\[
D^{t,A}(\epsilon\cdot \zeta) =(d_A \epsilon + d_A^* \epsilon)\cdot \zeta -\frac{5}{8}\,\tau_0\, \epsilon\cdot \zeta - (\epsilon \;\lrcorner\; \tau_{27})\cdot \zeta+ \frac{t}{8}\,\tau_0\, \epsilon\cdot \zeta + 3t\,(\epsilon \;\lrcorner\; \tau_{27})\cdot \zeta.
\]
Assume that $\epsilon$ satisfies the \textit{linearised instanton equation} together with
the \textit{Coulomb gauge condition}, i.e., $d_A \epsilon = 0$ and $d_A^* \epsilon = 0$, then
\begin{equation}
D^{t,A}(\epsilon\cdot \zeta)=\frac{-5 + t}{8}\,\tau_0\, \epsilon\cdot \zeta
+ (3t-1)\,(\epsilon \;\lrcorner\; \tau_{27})\cdot \zeta.    
\end{equation}
\end{proof}
Since $\zeta$ is parallel with respect to $\nabla^1$, then
$D^{1,A}(\epsilon\cdot \zeta) = (D^{1,A} \epsilon)\cdot \zeta,$
in consequence, 
\begin{equation}
(D^{1,A} \epsilon)\cdot \zeta
=-\frac{\tau_0}{2}\, \epsilon\cdot \zeta
+ 2\,(\epsilon \;\lrcorner\; \tau_{27})\cdot \zeta.
\end{equation}
Hence, the infinitesimal deformation space of a $\gt$-instanton $A$
on a $7$-manifold with coclosed $\gt$-structure
is isomorphic to the kernel of the operator
\begin{equation}\label{eq:Opeerador}
 D^{1,A}
+ \frac{1}{2}\,\tau_0\, \mathrm{Id}_{\Omega^1(M,\mathrm{Ad}(P))}
- 2\, \tau_{27}^* \otimes \mathrm{Id}_{\mathrm{Ad}(P)}
\in \mathrm{End}\bigl(\Omega^1(M,\mathrm{Ad}(P))\bigr).   
\end{equation}
We have
$D^{t,A}= D^{1,A} + (t-1)\,\frac{1}{4}\, g^{-1} \Hpi.$
Consequently,
\[
D^{t,A}(\epsilon\cdot \zeta)=D^{1,A}(\epsilon\cdot \zeta)
+ (t-1)\,\frac{\tau_0}{8}\, \epsilon\cdot \zeta
+ (t-1)\,3\,(\epsilon \;\lrcorner\; \tau_{27})\cdot \zeta.
\]
Setting $t=\frac13$, we obtain $D^{1/3,A}(\epsilon\cdot \zeta) = D^{1,A}(\epsilon\cdot \zeta) - \frac{\tau_0}{12}\, \epsilon\cdot \zeta-2\,(\epsilon \;\lrcorner\; \tau_{27})\cdot \zeta$  and consequently, 
\begin{equation}\label{eq:key-operator-identity} 
D^{1/3,A} + \frac{7}{12}\,\tau_0\, \mathrm{Id}
= D^{1,A}+ \frac{1}{2}\,\tau_0\, \mathrm{Id}- 2\, \tau_{27}^* \otimes \mathrm{Id}_{\mathrm{Ad}(P)}
\end{equation}
\subsection{Weitzenböck inequality and curvature obstruction}
It is known the Weitzenböck–Lichnerowicz formula with torsion cf. \cite{Agricola2004Holonomy}
\begin{proposition}\label{prop:weitzenbock-torsion}
Let $EM$ be a vector bundle associated with a principal bundle $P$ and
let $\mu \in \Gamma(\Spi \otimes EM)$.
Let $A$ be any connection on $P$.
Then, for all $t \in \mathbb{R}$, the following identity holds:
\begin{equation}\label{eq:weitzenbock-torsion}
(D^{t/3,A})^2 \mu=(\nabla^{t,A})^\ast \nabla^{t,A}\mu
+ \frac{1}{4}\,\mathrm{Scal}\,\mu
+ \frac{t}{6}\, d\varphi \cdot \mu
- \frac{t^2}{18}\,\|\varphi\|^2 \mu
+ F \cdot \mu.
\end{equation}
\end{proposition}
As an application of Propositions~\ref{prop:infinitesimal-dirac} and~\ref{prop:weitzenbock-torsion}, we prove the following rigidity result.
\begin{corollary}\label{prop:rigidity}
Let $(V^{5,2},\varphi)$ be the Stiefel manifold endowed with the coclosed
$\gt$--structure given in Corollary \ref{cor:V52 Sasakian} and let $A$ denote the $\gt$--instanton \eqref{eq: invariant_connection} on the $\SO(3)$-bundle $P_1=\V\times_{\phi_1} \SO(3)$. Then, for $y_2>0$ as in  \eqref{eq: curvature_form} there exists a non--empty interval
$y_2 \in I$ such that,  for all values $y_2 \in I$, the $\gt$--instanton $A$ is rigid for these values of the metric parameter $y_2$.
\end{corollary}
\begin{proof}
We consider~\eqref{eq:weitzenbock-torsion} for $t=1$. Taking $\epsilon\cdot\zeta \in \Gamma(\Spi\otimes\Ad(P))$, we obtain
\begin{equation}\label{eq:norm13}
\begin{aligned}
\bigl|\nabla^{1/3,A}(\epsilon\cdot\zeta)\bigr|^2
&=
\left\langle (D^{\frac13,A})^2(\epsilon\cdot\zeta),\, \epsilon\cdot\zeta \right\rangle
- \frac{1}{4}\,\mathrm{Scal}\,|\epsilon\cdot\zeta|^2  
- \frac{1}{18}\,\langle d\varphi\cdot(\epsilon\cdot\zeta),\, \epsilon\cdot\zeta\rangle\\
&\quad
+ \frac{1}{162}\,\|\varphi\|^2 |\epsilon\cdot\zeta|^2
- \langle F_A(\epsilon\cdot\zeta),\, \epsilon\cdot\zeta\rangle .
\end{aligned}
\end{equation}
From Proposition~\ref{prop:infinitesimal-dirac}, if   $\alpha\in\Omega^1(M,\Ad(P))$ is an infinitesimal deformation of the
$\gt$--instanton $A$, then $\epsilon$ satisfies $D^{1/3,A}(\epsilon\cdot\zeta)=-\frac{7}{12}\,\tau_0\, \epsilon\cdot\zeta,$ hence  
\begin{equation}\label{eq:D13-square}
\left\langle (D^{\frac13,A})^2(\epsilon\cdot\zeta),\, \epsilon\cdot\zeta \right\rangle=\frac{49}{144}\,\tau_0^2\, \vert \epsilon\cdot\zeta\vert^2.
\end{equation}
From \eqref{eq: G2 Sasakian metric}, we compute 
\begin{align*}
d\varphi&=2\left(X^{2356} + X^{2457} + X^{3467}
\right)-\frac{3 P_1}{2y_2}
\left(X^{1237} - X^{1246} + X^{1345} - X^{1567}
\right)\\
&+\frac{3P_3 }{2y_2}
\left(X^{1234} - X^{1267} + X^{1357} - X^{1456}\right).
\end{align*}
where $P_1=a(a^2+b^2)^{-1/2}$ and $ P_3=b(a^2+b^2)^{-1/2}$. Notice that   $P_1^2 + P_3^2 = 1 $ and $P_1 + P_2 \le 1$. On the other hand, from the proof of Theorem \ref{thm:Nearly}, we have
$\tau_0= \frac{6}{7}
\left( 2 + \frac{1}{y_2}\right) $ and therefore,
\begin{align*}
\ast\tau_3 &= d\varphi - \tau_0 \psi \\
=&
\left( \frac{26}{7} + \frac{6}{7y_2}\right) \left( X^{2356} + X^{2456} + X^{3467} \right)-
\left(\frac{6}{7} + \frac{33}{14}\frac{P_1}{y_2}\right)
\left(X^{1237} - X^{1246} + X^{1345} - X^{1567}\right)\\
&+
\left(\frac{6}{7} + \frac{33}{14}\frac{P_3}{y_2}
\right)\left(X^{1234} - X^{1267} + X^{1357} - X^{1456}\right).
\end{align*}  
Hence $\lvert \tau_3 \rvert^2 =
3\left(\frac{26}{7} + \frac{6}{7y_2}\right)^2
+4\left(\frac{6}{7} + \frac{33}{14}\frac{P_1}{y_2}
\right)^2
+4\left(\frac{6}{7} + \frac{33}{14}\frac{P_3}{y_2}
\right)^2$ and using 
$
P_1 + P_3 \le P_1^2 + P_3^2 = 1, 
$ we get
\begin{align}\label{eq: norm tau3}
\lvert \tau_3 \rvert^2
\le\frac{4}{49}\left(241
+276\,\frac{1}{y_2}+\frac{1125}{4}\,\frac{1}{y_2^2}\right).
\end{align} 
From \eqref{eq:norm13},  using  \eqref{eq:D13-square} 
\begin{align*}
(*)&\le
\left(
\frac{49\lvert \tau_0 \rvert^2}{144}
- \frac{\mathrm{Scal}}{4}
- \frac{\lvert \tau_0 \rvert}{18}
+ \frac{7}{162}
+ \frac{1}{18}\lvert \tau_{3} \rvert
+ \lvert F_A \rvert
\right)
\lvert \epsilon  \cdot \zeta \rvert^2.
\end{align*} 
Thus, from \eqref{eq: curvature_form} and \eqref{eq: norm tau3}, we obtain
\begin{equation}\label{eq:weitzenbock_inequality}
\begin{aligned}
0 &\le\left(\frac{49\lvert \tau_0 \rvert^2}{144}- \frac{\mathrm{Scal}}{4}
- \frac{\lvert \tau_0 \rvert}{8}+ \frac{7}{162} 
+ \frac{\lvert \tau_3 \rvert}{18}+ \lvert F_A \rvert\right)
\lvert \epsilon  \rvert^2\\
&\le\Bigg[\frac{1}{4}\left(2 + \frac{1}{y_2}\right)^2+3-
\frac{6}{y_2}
-\frac{3}{28}\left( 2 + \frac{1}{y_2} \right)+\frac{7}{162}
+\frac{1}{63} \sqrt{241+\frac{276}{y_2}+\frac{1125}{4y_2^2}}
+\frac{\sqrt{3}}{y_2} \Bigg] \lvert \epsilon  \rvert^2.
\end{aligned}
\end{equation}
Setting $x=\frac{1}{y_2},$ the right–hand side of \eqref{eq:weitzenbock_inequality} can be written as $f(x)\,|\epsilon|^2,$ where for $x\ge0$
\[
\begin{aligned}
f(x):=&\frac14(2+x)^2+3-6x-\frac{3}{28}(2+x)+\frac{7}{162}
+\frac{1}{63}\sqrt{241+276x+\frac{1125}{4}x^2}+\sqrt3\,x 
\end{aligned}
\]
The function $f$ is continuous on $[0,\infty)$, moreover, a direct evaluation shows that $f(x)$ becomes negative on a bounded interval. In particular,
$f(x)<0 \quad \text{for all } x\in(\frac53,10).$\footnote{This interval is not maximal} Equivalently,
$f(y_2)<0$ for all  $y_2\in\left(\tfrac{1}{10},\tfrac{3}{5}\right)$. 
For such values of the metric parameter $y_2$, the right--hand side of the
Weitzenböck inequality becomes strictly negative (see Figure~\ref{fig:rigid}),
while the left--hand side is non--negative. This yields a contradiction unless
$\epsilon \equiv 0.$
Therefore, the instanton is rigid for this range of metric parameters.
\begin{figure}[h]
\centering
\begin{tikzpicture}
\begin{axis}[
    width=0.65\textwidth,
    height=0.45\textwidth,
    axis lines=middle,
    xmin=-5, xmax=15,
    ymin=-7, ymax=30,
    xlabel={$x$},
    ylabel={$f(x)$},
    ticks=none,
    samples=300,
    domain=-5:15,
]
\addplot[thick] 
{ 1/4*(2+x)^2 + 3 - 6*x - 3/28*(2+x) + 7/162
  + 1/63*sqrt(241 + 276*x + 1125/4*x^2)
  + sqrt(3)*x };
\end{axis}
\end{tikzpicture}\caption{Plot of the right--hand side of the Weitzenböck inequality as a function of the metric parameter $y_2$. This shows that the expression becomes  strictly negative for a non-empty interval of positive values of $y_2$.} \label{fig:rigid} 
\end{figure}

\end{proof}
 \appendix 
\bibliography{biblio}

\end{document}